   \def\pathscale{1.2}
\tikzstyle{vertex}=[circle,fill=black!75,minimum size=5pt,inner sep=0pt]
\tikzstyle{car}=[minimum size=8pt,inner sep=0pt]
\newcommand{\E}[1]{\ensuremath{\mathbb{E} \left[#1 \right]}}
\newcommand{\Prob}[1]{\ensuremath{\mathbb{P} \left(#1 \right)}}
\newcommand{\bP}{\ensuremath{\mathbb{P}}}
\newcommand{\var}[1]{\ensuremath{\mathrm{var} \left(#1 \right)}}
\newcommand{\R}{\ensuremath{\mathbb{R}}}
\newcommand{\Z}{\ensuremath{\mathbb{Z}}}
\newcommand{\NN}{\ensuremath{\mathbb{N}}}
\newcommand{\Po}{\ensuremath{\mathrm{Po}}}
\newcommand{\PGW}{\ensuremath{\mathrm{PGW}}}
\newcommand{\PGWinf}{\ensuremath{\mathrm{PGW}_{\infty}}}
\newcommand{\fl}[1]{\ensuremath{\lfloor #1 \rfloor}}
\renewcommand{\subset}{\subseteq}
\newcommand{\convdist}{\ensuremath{\stackrel{d}{\longrightarrow}}}
\newcommand{\convprob}{\ensuremath{\stackrel{p}{\rightarrow}}}
\newcommand{\equidist}{\ensuremath{\stackrel{d}{=}}}
\newenvironment{proofOfTheorem}[1]{\begin{proof}[\textit{Proof of Theorem \ref{#1}}]}{\end{proof}}
\newenvironment{itemize*}%
  {\vspace{-0.3cm}%
  \begin{itemize}%
    \setlength{\itemsep}{0pt}%
    \setlength{\parskip}{0pt}}%
  {\end{itemize}}
\newenvironment{enumerate*}%
  {\vspace{-0.3cm}%
  \begin{enumerate}%
    \setlength{\itemsep}{0pt}%
    \setlength{\parskip}{0pt}}%
  {\end{enumerate}}
\newtheorem{thm}{Theorem}[section]
\newtheorem{lemma}[thm]{Lemma}
\newtheorem{fact}[thm]{Fact}
\newtheorem{prop}[thm]{Proposition}
\newtheorem{cor}[thm]{Corollary}
\newtheorem{conj}[thm]{Conjecture}
\title{Parking on a random tree}
\author{Christina Goldschmidt}
\address{Department of Statistics and Lady Margaret Hall, University of Oxford}
\email{goldschm@stats.ox.ac.uk}
\author{Micha{\l} Przykucki}
\address{Mathematical Institute and St Anne's College, University of Oxford}
\email{przykucki@maths.ox.ac.uk}
\date{\today}
\begin{document}

\begin{abstract}
Consider a uniform random rooted tree on vertices labelled by $[n] = \{1,2,\ldots,n\}$, with edges directed towards the root.  We imagine that each node of the tree has space for a single car to park.  A number $m \le n$ of cars arrive one by one, each at a node chosen independently and uniformly at random.  If a car arrives at a space which is already occupied, it follows the unique path oriented towards the root until it encounters an empty space, in which case it parks there; if there is no empty space, it leaves the tree.  Consider $m = \fl{\alpha n}$ and let $A_{n,\alpha}$ denote the event that all $\fl{\alpha n}$ cars find spaces in the tree.  Lackner and Panholzer ~\cite{LacknerPanholzer} proved (via analytic combinatorics methods) that there is a phase transition in this model.   Then if $\alpha \le 1/2$, we have $\Prob{A_{n,\alpha}} \to \frac{\sqrt{1-2\alpha}}{1-\alpha}$, whereas if $\alpha > 1/2$ we have $\Prob{A_{n,\alpha}} \to 0$.  We give a probabilistic explanation for this phenomenon, and an alternative proof via the objective method.  Along the way, we are led to consider the following variant of the problem: take the tree to be the family tree of a Galton-Watson branching process with Poisson(1) offspring distribution, and let an independent Poisson($\alpha$) number of cars arrive at each vertex. Let $X$ be the number of cars which visit the root of the tree.  Then for $\alpha \le 1/2$, we have $\E{X} \leq 1$, whereas for $\alpha > 1/2$, we have $\E{X} = \infty$.  This discontinuous phase transition turns out to be a generic phenomenon in settings with an arbitrary offspring distribution of mean at least 1 for the tree and arbitrary arrival distribution.
\end{abstract}

\maketitle

\section{Introduction} \label{sec:intro}

Let $\Pi_n$ be the directed path on $[n] = \{1,2,\ldots,n\}$ with edges directed from $i+1$ to $i$ for $i=1,2,\ldots,n-1$. Let $m \leq n$ and assume that $m$ cars arrive at the path in some order, with the $i$th driver wishing to park in the spot $s_i \in [n]$. If a driver finds their preferred parking spot empty, they stop there. If not, they drive along the path towards $1$, taking the first available place. If no such place is found, they leave the path without parking. If all drivers find a place to park then we call $(s_1, s_2, \ldots, s_m)$ a \emph{parking function for $\Pi_n$}.

Konheim and Weiss \cite{KonheimWeiss-parkingPath} introduced parking functions in the context of collisions of hashing functions. Imagine that we have a \emph{hash table} consisting of a linear array of $n$ cells, where we want to store $m$ items. We use a \emph{hashing function} $h:[m] \to [n]$ to determine where each item is stored. Item $i$ is stored in cell $h(i)$, unless some item $j < i$ has already occupied it , in which case we have a \emph{collision}. We can resolve a collision by allocating item $i$ to the smallest cell $k > h(i)$ such that $k$ is empty at time $i$, if such a cell can be found. If not, our scheme fails, and we cannot allocate our items to the hashing table. This collision resolving scheme is clearly modelled by the parking functions described in the first paragraph.

Konheim and Weiss showed that for $1 \leq m \leq n$ cars there exist exactly $(n+1-m)(n+1)^{m-1}$ parking functions for $\Pi_n$. Hence, taking $\alpha \in (0,1)$ and $m = \lfloor \alpha n \rfloor$,  if the $i$th driver independently picks a uniformly random preferred parking spot $S_i$ then the probability that $(S_1,S_2, \ldots, S_m)$ is a parking function for $\Pi_n$ is
\[
  \frac{(n+1-m)(n+1)^{m-1}}{n^m} \to (1-\alpha) e^\alpha,
\]
as $n \to \infty$. In particular, this limiting probability is strictly positive for every $\alpha \in (0,1)$.

Some generalisations of parking functions and their connections to other combinatorial objects have been studied by, for example, Stanley \cite{Stanley-hyperplanesAndTrees, Stanley-parkingPartitions, Stanley-enumerativeCombinatorics, Stanley-hyperplanesAndParking}. In a recent paper, Lackner and Panholzer \cite{LacknerPanholzer} studied parking functions on other directed graphs, in particular on uniform random rooted labelled trees (uniform random rooted Cayley trees). Let $T_n$ denote such a tree on $n$ vertices. Each of the $m$ cars independently picks a uniform vertex and tries to park at it. If it is already occupied, the car moves towards the root and parks at the first empty vertex it encounters. If it finds no empty vertex, it leaves the tree. Lackner and Panholzer (see Theorem 4.10 and Corollary 4.11 in \cite{LacknerPanholzer}) prove that in this setting there is a phase transition.

\begin{thm} \label{thm:L&P}
Let $T_n$ denote a uniform random rooted labelled tree on $n$ vertices. Let $A_{n,\alpha}$ be the event that all $\lfloor \alpha n \rfloor$ cars, with uniform and independent random preferred parking spots, can park on $T_n$. Then 
\[
 \lim_{n \to \infty} \bP(A_{n,\alpha}) = 
 \begin{cases} 
  \frac{\sqrt{1-2\alpha}}{1-\alpha} & \text{ if $0 \le \alpha \le 1/2$,} \\
  0 & \text{ if $\alpha > 1/2$}.
 \end{cases}
\]
\end{thm}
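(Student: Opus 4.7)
I plan to prove Theorem~\ref{thm:L&P} by the objective method: passing from the finite tree $T_n$ to its local weak limit at the root, an infinite Kesten-type tree, and analysing the parking recursion there. First, I would Poissonise by replacing $\fl{\alpha n}$ cars with an independent $\Po(\alpha n)$ number, so that the number of direct arrivals $N_v$ at each vertex is an independent $\Po(\alpha)$ random variable. On any rooted tree, let $Y_v$ be the number of cars ever visiting $v$ and $X_v := \max(Y_v - 1, 0)$ be the number of cars leaving the subtree at $v$ through its parent edge, so that $Y_v = N_v + \sum_{w} X_w$, summed over the children $w$ of $v$. The event $A_{n,\alpha}$ is precisely $\{X_r = 0\}$ where $r$ is the root.

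The core of the analysis takes place on the unconditioned $\PGW(1)$ tree, where $X$ satisfies the distributional fixed point $X \equidist \max(N + \sum_{i=1}^K X_i - 1, 0)$ with $K \sim \Po(1)$, $N \sim \Po(\alpha)$, and $(X_i)$ i.i.d.\ copies of $X$. Under $\E{X} < \infty$, the identity $X = Y - 1 + \I{Y=0}$ (with $Y := N + \sum_i X_i$) combined with $\E{Y} = \alpha + \E{X}$ forces $\Prob{Y = 0} = 1 - \alpha$, and a second-moment computation---equating $\E{Y^2}$ via the branching structure with the value produced by $X^2 = (Y-1)^2 - \I{Y=0}$---yields the quadratic $(\alpha + \E{X})^2 = 2\E{X}$, whose smaller root $\E{X} = (1-\alpha) - \sqrt{1-2\alpha}$ is real precisely when $\alpha \le 1/2$. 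For $\alpha > 1/2$ this forces $\E{X} = \infty$, which is the probabilistic mechanism of the phase transition.

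Next I would use the fact that the local weak limit of $T_n$ rooted at $r$ is Kesten's tree $\PGWinf$: an infinite spine $v_0 = r, v_1, v_2, \ldots$, with each $v_k$ carrying an additional independent $\Po(1)$ number of non-spine children, each starting an independent $\PGW(1)$ subtree. By stationarity along the spine, the outgoing flux $X^* := X_{v_0}$ satisfies the Lindley-type distributional equation $X^* \equidist \max(X^* + Z - 1, 0)$, where $Z := N + \sum_{i=1}^K X_i$ uses the $\PGW(1)$ distribution of $X$ and the two copies of $X^*$ are independent. A standard generating function argument gives $\Prob{X^* = 0} = (1 - \E{Z}) / \Prob{Z = 0}$ whenever $\E{Z} < 1$; substituting $\E{Z} = \alpha + \E{X} = 1 - \sqrt{1-2\alpha}$ and $\Prob{Z = 0} = \Prob{Y = 0} = 1 - \alpha$ from the previous step yields the claimed $\sqrt{1-2\alpha}/(1-\alpha)$.

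The remaining and most delicate step is the transfer $\Prob{X_r^{(n)} = 0} \to \Prob{X^* = 0}$. Parking is not strictly local---a single car can in principle travel a long way up the tree---so local weak convergence does not automatically deliver convergence of the root flux. I would handle the subcritical case $\alpha < 1/2$ by truncating to a depth-$D$ ball around $r$, applying local weak convergence at finite depth (where the parking dynamics are manifestly local), and using the finiteness of $\E{X^*}$ to control the contribution of cars originating at depth exceeding $D$, uniformly in $n$, as $D \to \infty$. The supercritical case $\alpha > 1/2$ requires a separate argument: the instability of the Lindley recursion, together with the presence in $T_n$ of a large $\PGW(1)$-like subtree at the root whose outflow diverges, forces $\Prob{A_{n,\alpha}} \to 0$. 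This transfer step is the principal technical obstacle, as the sub- and supercritical sides of the transition demand quite different treatments.
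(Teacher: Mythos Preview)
Your overall architecture---pass to the local weak limit, analyse the RDE on the $\PGW(1)$ tree, reduce to a skip-free random walk along the spine of the Kesten tree, then transfer back to $T_n$---is exactly the paper's strategy; your $Z$ coincides with the paper's $X$, and your formula $\Prob{X^*=0}=(1-\E{Z})/\Prob{Z=0}$ is precisely the skip-free identity~\eqref{eq:skipfree}. There is, however, a genuine gap in your computation of $\E{X}$. The second-moment identity only yields the quadratic $(\alpha+\E{X})^2=2\E{X}$ under the extra assumption $\E{X^2}<\infty$; if the variance is infinite then both sides of your equation $\E{X^2}=\E{Y^2}-2\E{Y}+\alpha$ are infinite and nothing pins $\E{X}$ to either root (nor gives a contradiction for $\alpha>1/2$). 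The paper explicitly flags this variance hypothesis as ``unjustified'' when it discusses the moment heuristic in Section~\ref{sec:generalisations}, and in its rigorous proof avoids moments entirely: it derives the functional equation~\eqref{eq:functionalEquation} for $G(s)=\E{s^X}$, uses the Lambert $W$ function to show that $p=\Prob{X=0}=1-\alpha$ holds exactly for $\alpha\le 1/2$ (Corollary~\ref{cor:psmallAlpha}), and then obtains $\E{X}=G'(1-)=1-\sqrt{1-2\alpha}$ by applying L'H\^opital to~\eqref{eq:expectationIntermediate}. Your first-moment step is the paper's Proposition~\ref{prop:p0}; to complete your route you would need a separate proof that $\E{X}<\infty$ forces $\E{X^2}<\infty$, which is not automatic.

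On the transfer, your depth-$D$ truncation is close in spirit to the paper's upper-bound argument, but local weak convergence alone does not control, \emph{uniformly in $n$}, cars originating beyond depth $D$ in $T_n$. The paper supplies this control with two monotone couplings: it realises the $T_n$ as an increasing sequence with limit $T$ (via Luczak--Winkler), and sandwiches the multinomial arrivals between i.i.d.\ $\Po(\gamma)$ and $\Po(\beta)$ configurations for $\gamma<\alpha<\beta$. Monotonicity of $\chi$ in both the tree and the arrivals then turns the comparison into a one-line inequality on each side, and the case $\alpha\ge 1/2$ drops out of the same monotonicity rather than requiring the separate ``instability'' argument you sketch.
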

In fact, the result proved in \cite{LacknerPanholzer} is much sharper: it not only demonstrates that there is a phase transition, but it also gives an asymptotic formula for $\bP(A_{n,\alpha})$ which specifies its behaviour in $n$, including at the critical point  $\alpha = 1/2$. However, the analytic methods used in \cite{LacknerPanholzer} offer no explanation for \emph{why} the phase transition occurs.  The purpose of the present paper is to find a probabilistic explanation for this phenomenon.  We employ the objective method, pioneered by Aldous and Steele~\cite{AldousSteele-LocalConvergence}, to reprove Theorem~\ref{thm:L&P}.  Much of our analysis is performed in the context of a limiting version of the above model (its so-called \emph{local weak limit}).  Instead of $T_n$, we consider a critical Galton-Watson tree with Poisson mean $1$ offspring distribution, conditioned on non-extinction.  We replace the multinomial counts of cars wishing to park at each vertex by independent Poisson mean $\alpha$ numbers of cars at each vertex. Once we have analysed this limiting model, it is relatively straightforward to then show that the probability all cars can park really gives the limit of $\Prob{A_{n,\alpha}}$ as $n \to \infty$.

\subsection{The limiting model}
Throughout this paper we write $\Po(\alpha)$ for the Poisson distribution with mean $\alpha$.  Write $\PGW(\alpha)$ for the law of the family tree of a Galton--Watson branching process with $\Po(\alpha)$ offspring distribution (this is canonically thought of as an ordered tree rooted at the progenitor of the branching process, although we shall frequently ignore the ordering).  We begin by formally introducing our limiting model.

Let $T$ be an infinite random tree defined as follows. Start with an infinite directed path $\Pi_\infty$ on $\NN = \{1,2,\ldots\}$, with edges directed from $n+1$ to $n$ for all $n \geq 1$. Then, for every $n$, add an independent $\PGW(1)$ tree rooted at $n$, with edges directed towards $n$ (see Figure \ref{fig:treeLimit}).  Finally, root the resulting (infinite) tree at 1.  This random tree has the same law as a $\PGW(1)$ tree conditioned on non-extinction, and we will write $\PGWinf(1)$ for its law. (Since extinction occurs with probability 1, the conditioning must be obtained by a limiting procedure such as conditioning the tree to survive to generation $k$ and then letting $k \to \infty$; see Kesten~\cite{Kesten}.  We will discuss a more general case of this result in Theorem~\ref{thm:Kesten} below.) At every vertex of the resulting tree, place an independent $\Po(\alpha)$ number of cars. There is only space for one of them, and any surplus cars drive towards the root, parking in the first available space.

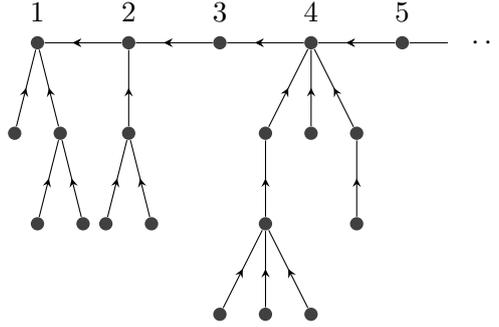
\begin{figure}[hbt]
\begin{tikzpicture}[scale=\pathscale,decoration={markings, mark=between positions 0 and 1 step \pathscale cm with {\arrow {stealth}}}]

    \foreach \pos/\name in {{(0,0)/1}, {(1,0)/2}, {(2,0)/3}, {(3,0)/4}, {(4,0)/5}} {
           \node[vertex] (v\name) at \pos {};
           \node[above = 0.4em] () at \pos {$\name$};
           }
        \foreach \pos/\name in {{(-0.5,0)/a1}, {(0.5,0)/a2}, {(1.5,0)/a3}, {(2.5,0)/a4}, {(3.5,0)/a5}}
           \node[] (\name) at \pos {};
	\foreach \source/\dest in {v2/v1, v3/v2, v4/v3, v5/v4}
         \draw [] (\source) -- (\dest);
	\foreach \source/\dest in {a2/a1, a3/a2, a4/a3, a5/a4}
         \draw [decorate] (\source) -- (\dest);
         
    \draw [] (4.5,0) -- (v5);
    \node[right = 2em] () at (v5) {$\cdots$};
    
    \node[vertex] (v11) at (-0.25,-1) {};
    \node[vertex] (v12) at (0.25,-1) {};
    \draw [] (v11) -- (v1);
    \draw [decorate] (v11)+(0.125,0.5) -- (v1);
    \draw [] (v12) -- (v1);
    \draw [decorate] (v12)+(-0.125,0.5) -- (v1);
    
    \node[vertex] (v121) at (0,-2) {};
    \node[vertex] (v122) at (0.5,-2) {};
    \draw [] (v12) -- (v121);
    \draw [decorate] (v121)+(0.125,0.5) -- (v12);
    \draw [] (v12) -- (v122);
    \draw [decorate] (v122)+(-0.125,0.5) -- (v12);
    
    \node[vertex] (v21) at (1,-1) {};
    \draw [] (v21) -- (v2);
    \draw [decorate] (v21)+(0,0.5) -- (v2);
    \node[vertex] (v211) at (0.75,-2) {};
    \node[vertex] (v212) at (1.25,-2) {};
    \draw [] (v21) -- (v211);
    \draw [decorate] (v211)+(0.125,0.5) -- (v21);
    \draw [] (v21) -- (v212);
    \draw [decorate] (v212)+(-0.125,0.5) -- (v21);
    
    \node[vertex] (v41) at (2.5,-1) {};
    \node[vertex] (v42) at (3,-1) {};
    \node[vertex] (v43) at (3.5,-1) {};
    \draw [] (v41) -- (v4);
    \draw [decorate] (v41)+(0.25,0.5) -- (v4);
    
    \node[vertex] (v411) at (2.5,-2) {};
    \draw [] (v41) -- (v411);
    \draw [decorate] (v411)+(0,0.5) -- (v41);
    \node[vertex] (v4111) at (2,-3) {};
    \node[vertex] (v4112) at (2.5,-3) {};
    \node[vertex] (v4113) at (3,-3) {};
    \draw [] (v411) -- (v4111);
    \draw [] (v411) -- (v4112);
    \draw [] (v411) -- (v4113);
    \draw [decorate] (v4111)+(0.25,0.5) -- (v411);
    \draw [decorate] (v4112)+(0,0.5) -- (v411);
    \draw [decorate] (v4113)+(-0.25,0.5) -- (v411);
    \draw [] (v42) -- (v4);
    \draw [decorate] (v42)+(0,0.5) -- (v4);
    \draw [] (v43) -- (v4);
    \draw [decorate] (v43)+(-0.25,0.5) -- (v4);
    \node[vertex] (v431) at (3.5,-2) {};
    \draw [] (v431) -- (v43);
    \draw [decorate] (v431)+(0,0.5) -- (v43);
    \foreach \source/\dest in {a2/a1, a3/a2, a4/a3, a5/a4}
      \draw [decorate] (\source) -- (\dest);
    
\end{tikzpicture}
\caption{The tree $T$, a critical Poisson--Galton--Watson tree conditioned on non-extinction. The trees attached to the path on $\NN$ are almost surely finite.}
\label{fig:treeLimit}
\end{figure}

\subsection{A local weak limit} 
Our model is the limit of the problem considered in \cite{LacknerPanholzer} in the sense of local weak convergence, which we now introduce.

First, let $\mathcal{G}$ be the set of graphs $G=(V(G),E(G))$ with finite or countably infinite vertex set $V(G)$ which are additionally \emph{locally finite} i.e.\ all vertex degrees are finite, which is equivalent to the property that for each $v \in V(G)$ and each $r \ge 0$, the number of vertices within graph distance $r$ of $v$ is finite.  Let $\mathcal{G}_* = \{(G,\rho): G \in \mathcal{G}, \rho \in V(G)\}$ be the set of rooted locally finite graphs, considered up to rooted isomorphism.  (We will abuse notation by writing $(G,\rho)$ for the equivalence class of $(G,\rho)$.)  For $(G,\rho) \in \mathcal{G}_*$, write $d_G$ for the graph distance in $G$, and let $B_G(\rho,r) = \{v \in V(G): d_G(\rho,v) \le r\}$, the (closed) ball of radius $r$ around $\rho$ in $G$. Write $G[\rho,r]$ for the induced subgraph of $G$.  We make $\mathcal{G}_*$ into a metric space by endowing it with the distance $d_{\mathrm{loc}}$ defined by
\[
d_{\mathrm{loc}}((G,\rho), (G', \rho')) = 2^{-\sup\{r \ge 0: G[\rho,r] \cong G'[\rho',r]\}}.
\]
Now let $(G,\rho)$ and $(G_n, \rho_n)_{n \ge 1}$ be random rooted locally finite graphs.  Then, following Benjamini and Schramm~\cite{BenjaminiSchramm} and Aldous and Steele~\cite{AldousSteele-LocalConvergence}, if $(G_n,\rho_n) \convdist (G,\rho)$ with respect to this topology, we say that $(G,\rho)$ is the \emph{local weak limit} of $(G_n, \rho_n)_{n \ge 1}$. It is a well-known fact, first observed by Grimmett~\cite{Grimmett-TreeLimit}, that $(T, \rho)$ (with $\rho=1$) is the local weak limit of $(T_n, \rho_n)_{n \ge 1}$, where $\rho_n$ is the progenitor of the branching process.  Note, in particular, that $(T,\rho)$ is locally finite. (Indeed, it has quadratic volume growth, in the sense that there exists a constant $C > 0$ such that
\[
\Prob{|B_T(\rho,r)| > \lambda r^2} \le C\exp(-C\lambda), \quad \lambda \ge 0.
\]
This is essentially a consequence of Proposition 2.7 of Barlow and Kumagai~\cite{BarlowKumagai}; see the discussion in Section 5.3 of Addario-Berry~\cite{Addario-Berry}.) 

Now, for each $v \in V(T_n)$, let $P_{n,m}(v)$ be the number of cars wishing to park at $v$ out of the total of $m$ cars. The vector $(P_{n,m}(v), v \in V(T_n))$ has a $\mathrm{Multinomial}(m; 1/n, \ldots, 1/n)$ distribution and so, for any finite subset $S \subset V(T_n)$ which is chosen independently of $(P_{n,m}(v), v \in V(T_n))$, 
\[
(P_{n,\fl{\alpha n}}(v), v \in S) \convdist (P(v), v \in S),
\]
where the random variables $(P(v), v \in S)$ are i.i.d.\ $\Po(\alpha)$.  

In order to combine these results, we treat the numbers of cars as integer-valued marks on the vertices of our trees.  Let $\mathcal{M} = \{(G,\rho, \mathbf{x}): (G,\rho) \in \mathcal{G}_*, \mathbf{x} \in \{0,1,2,\ldots\}^{V(G)}\}$, the space of marked locally finite rooted graphs.  For $(G,\rho,\mathbf{x}), (G',\rho',\mathbf{x}') \in \mathcal{M}$, let $R((G,\rho, \mathbf{x}), (G',\rho',\mathbf{x}'))$ be the supremum of the set of $r \ge 0$ such that there exists an isomorphism $\phi: V(B_G(\rho,r)) \to V(B_{G'}(\rho',r))$ of $G[\rho,r]$ and $G'[\rho',r]$ such that additionally $x_v = x'_{\phi(v)}$ for all $v \in B_G(\rho,r)$.  Then letting $d_{\mathcal{M}}((G,\rho, \mathbf{x}), (G',\rho',\mathbf{x}')) = 2^{-R((G,\rho, \mathbf{x}), (G',\rho',\mathbf{x}'))}$ it is straightforward to verify that $(\mathcal{M}, d_{\mathcal{M}})$ is a Polish space.  With respect to the induced topology, we obtain
\begin{equation} \label{eqn:locallimit}
(T_n, \rho_n, (P_{n,\fl{\alpha n}}(v), v \in V(T_n))) \convdist (T, \rho, (P(v), v \in V(T)))
\end{equation}
as $n \to \infty$, where $(P(v), v \in V(T))$ are i.i.d.\ $\Po(\alpha)$ random variables depending on $T$ only through its vertex-labels.

\subsection{Main results}

The main part of our investigation of parking on random trees will be analysing the process on a PGW(1) tree. We summarise our results in the following theorem. (We will discuss the definition and properties of the Lambert W-function in Section \ref{sec:PoiGWcritical}.)
\begin{thm}
\label{thm:PoiGWcritical}
 Let $X$ denote the number of cars that visit the root of a $\PGW(1)$ tree with, for some $\alpha \in (0,1)$, an independent $\Po(\alpha)$ number of cars initially picking every vertex.
 \begin{enumerate}
  \item \label{thm:PoiGWcritical:smallAlpha}
  If $\alpha \in (0,1/2]$ then the probability generating function of $X$ is
   \[
    G(s) = -s W_{-1} \left(-\frac{1}{s} \exp \left(\alpha s - \alpha - 1 + (1-s^{-1})(1-\alpha) \right) \right),
   \]
   where $W_{-1}(x)$ is the $(-1)$-th branch of the Lambert W-function. Consequently, we have $p = \Prob{X=0} = 1-\alpha$ and $\E{X} = 1 - \sqrt{1 - 2\alpha}$.  
  \item
  \label{thm:PoiGWcritical:largeAlpha}
  If $\alpha > 1/2$ then we have $p = \bP(X=0) \in (1-\alpha,\frac{1}{4\alpha})$ and, taking
  \[
   s_p = \frac{1-\sqrt{1-4 p \alpha}}{2 \alpha},
  \]
  $p$ satisfies
  \[
   s_p^{-1} \exp \left (\alpha s_p - \alpha + \left ( 1-s_p^{-1} \right ) p \right ) - 1 = 0.
  \]
 Moreover, the probability generating function of $X$ is
  \[
    G(s) = -s W_{i} \left(-\frac{1}{s} \exp \left(\alpha s - \alpha - 1 + (1-s^{-1})p \right) \right),
  \]
  where $i = -1$ for $s \leq s_p$ and $i = 0$ otherwise. Consequently, for $\alpha > 1/2$ we have $\E{X} = \infty$.
 \end{enumerate}
\end{thm}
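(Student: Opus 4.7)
The plan is to set up a recursive distributional equation for $X$ by conditioning on the root's children, convert it to a functional equation for the probability generating function $G(s) = \E{s^X}$, and recognise the resulting implicit equation as a Lambert-W-type equation; the value of $p$ and the appropriate branch of $W$ will then be pinned down by analysing the behaviour at $s = 1$.

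Condition on the root having $D \sim \Po(1)$ children, each rooting an independent PGW(1) subtree whose own ``cars-visiting-root'' count is an i.i.d. copy $X_i$ of $X$. Since exactly one car parks at child $i$ whenever $X_i \geq 1$ and the rest continue up, the contribution to the root from the $i$-th branch is $(X_i - 1)^+$, and combining with the $C \sim \Po(\alpha)$ cars directly at the root gives
\[
X \equidist C + \sum_{i=1}^{D} (X_i - 1)^+.
\]
The elementary identity $\E{s^{(X-1)^+}} = p(1 - s^{-1}) + s^{-1} G(s)$ (where $p = G(0)$), combined with the Poisson compounding formula for the sum indexed by $D \sim \Po(1)$ and the PGF of $C \sim \Po(\alpha)$, yields the implicit equation
\[
G(s) = \exp\!\left( \alpha s - \alpha - 1 + p(1 - s^{-1}) + s^{-1} G(s) \right).
\]
Writing $\phi(s) = \alpha s - \alpha - 1 + p(1 - s^{-1})$ and rearranging as $(-s^{-1} G(s)) \exp(-s^{-1} G(s)) = -s^{-1} e^{\phi(s)}$ exhibits $-s^{-1} G(s)$ as a Lambert W value, producing the claimed closed form up to a branch choice.

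For case (\ref{thm:PoiGWcritical:smallAlpha}) ($\alpha \leq 1/2$), a monotone truncation argument --- letting $X^{(k)} \uparrow X$ be the cars reaching the root from vertices at depth at most $k$ gives the iteration $\E{X^{(k+1)}} = \alpha + \E{X^{(k)}} - 1 + \Prob{X^{(k)} = 0}$ --- shows that a finite $\E{X}$ forces the fixed point $p = 1 - \alpha$, and differentiating the functional equation once at $s = 1$ confirms $p = 1 - \alpha$ whenever $\E{X} < \infty$. To extract $\E{X}$ I set $u(s) = s^{-1} G(s) = 1 + v$, rewrite the functional equation as $\log u - u = \phi(s) - \log s =: \chi(s)$, and compare the Taylor expansions at $s = 1$: $-v^2/2 + O(v^3)$ on the left against $(2\alpha - 1)(s-1)^2/2 + O((s-1)^3)$ on the right, giving $(\E{X} - 1)^2 = 1 - 2\alpha$; the correct root is $\E{X} = 1 - \sqrt{1 - 2\alpha}$, selected by continuity in $\alpha$ from $\alpha = 0$. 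Finally, verifying that $\chi \leq -1$ on $(0, 1]$ with equality only at $s = 1$, and observing that $G(0) = p > 0$ forces the $W_{-1}$ branch as $s \to 0^+$ (the $W_0$ branch would give $G(0) = 0$), identifies $W_{-1}$ as the correct branch throughout.

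For case (\ref{thm:PoiGWcritical:largeAlpha}) ($\alpha > 1/2$), if $\E{X}$ were finite then the same argument would force $p = 1 - \alpha$, but now $\chi''(1) = 2\alpha - 1 > 0$ makes $s = 1$ a strict local \emph{minimum} of $\chi$ at level $-1$, contradicting the required domain bound $\chi \leq -1$ on $(0, 1]$; hence $\E{X} = \infty$. So $p > 1 - \alpha$ (otherwise $\chi'(1) \leq 0$ makes $\chi$ exceed $-1$ just to the left of $1$), and $\chi$ now has an interior local maximum at the smaller root $s_p = (1 - \sqrt{1 - 4\alpha p})/(2\alpha)$ of $\alpha s^2 - s + p = 0$. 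Near $s = 0$ the $W_{-1}$ branch is again required, but expanding near $s = 1^-$ shows that $W_{-1}$ would give $G(s) > 1$, forcing $W_0$ there; a continuous branch switch is possible only where the two real branches coincide, i.e.\ where the argument of $W$ equals $-e^{-1}$. Forcing this transition to occur exactly at $s_p$ yields $\chi(s_p) = -1$, which rearranges to the transcendental equation in the theorem. The main obstacle is justifying this branch-switching picture rigorously: uniqueness of the tangency value $p \in (1-\alpha, 1/(4\alpha))$ follows from the monotonicity $\partial \chi(s_p; p)/\partial p = 1 - 1/s_p < 0$, but verifying that the resulting piecewise-$W$ construction actually reproduces the distribution of $X$ --- by checking non-negativity of its Taylor coefficients and matching it with the monotone iterates $G_k \to G$ --- is the technical heart of the proof.
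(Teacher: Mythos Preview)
Your setup (the RDE, the functional equation for $G$, the Lambert~$W$ rearrangement, and the branch identification near $s=0$) is correct and matches the paper. The gap is in case~(\ref{thm:PoiGWcritical:smallAlpha}): your argument is circular. You prove only the conditional statement ``$\E{X}<\infty \Rightarrow p=1-\alpha$'', and then your Taylor expansion of $\chi$ at $s=1$ and your verification that $\chi\le -1$ both \emph{use} $p=1-\alpha$ (note $\chi'(1)=\alpha+p-1$ and $\chi''(1)=1-2p$). Nothing in your outline excludes the alternative ``$p>1-\alpha$ and $\E{X}=\infty$'' when $\alpha\le 1/2$; your two implications are consistent with either scenario. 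The paper breaks this loop in two steps. First, it shows $p\ge 1-\alpha$ unconditionally; your own $\chi'(1)$ argument from case~(\ref{thm:PoiGWcritical:largeAlpha}) already does this for every $\alpha$, you just did not invoke it here. Second, and this is the idea you are missing, it checks that the inequality $\chi\le -1$ on $(0,1]$ (hence $G=f_{-1}$ throughout) holds for \emph{every} $p\ge 1-\alpha$ when $\alpha\le 1/2$, and then evaluates the resulting $W_{-1}$ formula at $s^*=(1-p)/\alpha$: if $p>1-\alpha$ then $s^*\in(0,1)$ and the argument of $W_{-1}$ becomes $-(s^*)^{-1}\exp(-(s^*)^{-1})$, so $G(s^*)=1$, contradicting $\Prob{X=0}<1$. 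That contradiction forces $p=1-\alpha$ and closes the loop.

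For case~(\ref{thm:PoiGWcritical:largeAlpha}), your plan to justify the branch switch by checking non-negativity of Taylor coefficients and matching against the monotone iterates $G_k$ is substantially harder than what is needed. Since the RDE has a unique solution, once you know $G(s)\in\{f_0(s),f_{-1}(s)\}$ for each $s$, it suffices to locate the switch. The paper does this by a turning-point count: $h_p(s)=-s^{-1}e^{\phi(s)}$ has at most two critical points (its derivative vanishes iff $\alpha s^2-s+p=0$), and since $h_p(1)=-e^{-1}$ there is at most one $s'\in(0,1)$ with $h_p(s')=-e^{-1}$. Combining $G=f_{-1}$ near $0$ with the observation that $f_{-1}(s^*)=1$ at $s^*=(1-p)/\alpha\in(0,1)$ (so $G(s^*)=f_0(s^*)$) then forces a single switch at the smaller root $s'=s_p$, and $h_p(s_p)=-e^{-1}$ is exactly your transcendental equation for $p$. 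The bound $p<1/(4\alpha)$ drops out because two distinct real roots of $\alpha s^2-s+p$ are needed.
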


Perhaps the most striking aspect of Theorem~\ref{thm:PoiGWcritical} is that the quantity $\E{X}$ undergoes a discontinuous phase transition at $\alpha = 1/2$:
\begin{equation} \label{eqn:discphtr}
\E{X} =  \begin{cases}
1 - \sqrt{1-2\alpha} & \text{ for $\alpha \le 1/2$} \\
\infty & \text{ for $\alpha > 1/2$.}
\end{cases}
\end{equation}
We will discuss this phenomenon further in Section \ref{sec:generalisations}.

The second main result of this paper, which to a large extent is a corollary of Theorem \ref{thm:PoiGWcritical}, is the following theorem about parking on $T$.
\begin{thm}
\label{thm:infiniteParkingProb}
Let $T$ be a $\PGWinf(1)$ tree, rooted at $\rho$, with all edges directed towards $\rho$. Assume that an independent $\Po(\alpha)$ number of cars arrives at each vertex of the tree. Let $A_{\alpha}$ be the event that all the cars can park on $T$. Then
\[
\bP( A_\alpha) = \begin{cases}
\frac{\sqrt{1-2\alpha}}{1-\alpha} & \text{ if $0 \le \alpha \le 1/2$,} \\
0 & \text{ if $\alpha > 1/2$.}
\end{cases}
\]
\end{thm}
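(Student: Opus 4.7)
The plan is to reduce parking on the infinite tree $T$ to a one-dimensional Lindley-type random walk on the spine. For each spine vertex $n \geq 1$, let $X_n$ denote the number of cars from the attached $\PGW(1)$ subtree $T_n$ that reach $n$. Because cars move only towards the root, the parking dynamics inside $T_n$ above $n$ are unaffected by anything else in $T$, so $(X_n)_{n \geq 1}$ are i.i.d.\ with the law of the random variable $X$ in Theorem~\ref{thm:PoiGWcritical}. Letting $F_n$ be the number of cars crossing the spine edge from $n+1$ to $n$, the bookkeeping at each vertex gives $F_{n-1} = \max(0,\, F_n + X_n - 1)$; iterating from a truncation $F_N = 0$ and passing to the monotone limit in $N$ yields $F_0 = \sup_{j \geq 0}(S_j - j)$, where $S_j = X_1 + \cdots + X_j$. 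The event $A_\alpha$ is precisely $\{F_0 = 0\} = \{S_j \leq j \text{ for all } j \geq 0\}$, because a car fails to park on $T$ if and only if it is pushed past the root along the spine.

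The cases $\alpha \geq 1/2$ are then immediate. For $\alpha > 1/2$, Theorem~\ref{thm:PoiGWcritical} gives $\E{X} = \infty$, so the strong law forces $S_j/j \to \infty$ a.s.\ and $\bP(A_\alpha) = 0$. For $\alpha = 1/2$, we have $\E{X} = 1$ but $X$ is non-degenerate (since $\Prob{X = 0} = 1/2$), so the mean-zero random walk $S_j - j$ is recurrent and oscillates, giving $\bP(A_\alpha) = 0$ as well.

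For the subcritical case $0 \leq \alpha < 1/2$, the main step is to compute $q_0 := \Prob{\forall n \geq 1,\, S_n \leq n}$. Define $q_k := \Prob{\forall n \geq 1,\, S_n - n \leq k}$ for $k \geq 0$; conditioning on $X_1$ yields the recursion
\[
q_k \;=\; \sum_{j=0}^{k+1} p_j\, q_{k+1-j}, \qquad p_j := \Prob{X = j}.
\]
Writing $Q(s) = \sum_{k \geq 0} q_k s^k$ and letting $G_X$ be the PGF of $X$, a short rearrangement gives $Q(s)\,(G_X(s) - s) = p_0\, q_0$. Since $\E{X} < 1$, the walk drifts to $-\infty$ a.s., so $q_k \to 1$, and Abel's theorem gives $(1-s) Q(s) \to 1$ as $s \to 1^-$; combined with the expansion $G_X(s) - s \sim (\E{X} - 1)(s - 1)$ at $s = 1$, this forces
\[
q_0 \;=\; \frac{1 - \E{X}}{p_0} \;=\; \frac{\sqrt{1 - 2\alpha}}{1 - \alpha},
\]
using $\E{X} = 1 - \sqrt{1 - 2\alpha}$ and $p_0 = 1 - \alpha$ from Theorem~\ref{thm:PoiGWcritical}. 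The main obstacle is isolating the generating-function identity and justifying the passage $s \to 1^-$; once in place, the three regimes assemble cleanly into the stated formula.
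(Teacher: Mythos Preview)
Your proof is correct and follows the same overall approach as the paper: reduce parking on $T$ to the spine random walk with i.i.d.\ increments $X_n-1$, identify $A_\alpha$ with the event that the walk never goes positive, and then compute that probability using the inputs $\E{X}$ and $\Prob{X=0}$ from Theorem~\ref{thm:PoiGWcritical}. The only differences are in execution: the paper invokes a known hitting-probability identity for skip-free-to-the-right walks to obtain $q_0=(1-\E{X})/\Prob{X=0}$ in one line, whereas you re-derive that identity via the generating function $Q(s)(G_X(s)-s)=p_0q_0$ and Abel's theorem; and for $\alpha\ge 1/2$ the paper appeals to stochastic monotonicity in $\alpha$, while you argue directly from $\E{X}=\infty$ (SLLN) and mean-zero recurrence.
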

In particular, we recover the phase transition and limiting probabilities of Theorem~\ref{thm:L&P}.

We analyse the process of parking on $T$ in two stages. In the first stage, we limit our attention to the process on the critical Galton--Watson trees attached to the path $\Pi_{\infty}$. Our aim is to understand the random number of cars that visit the root of such a subtree, either because they initially chose to park there or because they have traversed the whole path from some other vertex of the subtree (we think of these cars as stopping at the root of their subtree and waiting till the end of the first stage).  We denote this random number of cars by $X$. The recursive definition of Galton--Watson trees allows us to express $X$ as a solution to the following recursive distributional equation (RDE):
\begin{equation} \label{eqn:rde}
X \equidist P + \sum_{i=1}^N (X_i -1)^+,
\end{equation}
where $P \sim \Po(\alpha)$, $N \sim \Po(1)$, $X_1, X_2, \ldots$ are i.i.d.\ copies of the (non-negative integer-valued) random variable $X$, $(X_i-1)^+ = \max\{X_i-1,0\}$, and all of the random variables on the right-hand side are independent.  (See the survey paper of Aldous and Bandyopadhyay~\cite{AldousBandyopadhyay} for more on the theory of RDE's.) Since the critical Galton--Watson tree is finite almost surely, and $X$ gives an explicit construction of a solution to \eqref{eqn:rde}, we obtain both existence and uniqueness of $X$. We use generating functions to understand the distribution of this solution and obtain the expressions in Theorem~\ref{thm:PoiGWcritical}.

Once we understand the law of $X$, we look at the parking process on the path $\Pi_\infty$ with $X_i$ cars arriving at $i \in \NN$, where $X_1, X_2, \ldots$ are i.i.d.\ copies of $X$.  The crucial observation here is that the cars can all park on $\Pi_\infty$ if and only if we have
\[
C_n = n - \sum_{k=1}^n X_k \geq 0 \quad \text{ for all $n \in \NN$.}
\]
This is because the first $n$ vertices of the path provide us with $n$ parking places, and the number of cars wishing to park in these spaces is at least $\sum_{k=1}^n X_k$: hence if $C_n$ is negative for some $n$ then we do not have a parking function for $\Pi_\infty$. On the other hand, if we do not have a parking function for $\Pi_\infty$ then there is some smallest $n$ such that the cars starting their journey on $[n]$ cannot all park on that initial segment of the path, and so we must have $C_n < 0$.

It will be useful to us later to know exactly how many cars arrive at 1.  $C_n$ is the difference between the total number of cars arriving somewhere in $\{1,2,\ldots, n\}$ and the number of available spaces.  If $C_n$ is negative then there is insufficient space to accommodate all of the cars arriving in $\{1,2,\ldots,n\}$ and at least $X_1 + (X_2 - 1) + \cdots + (X_n - 1) = 1 - C_n$ wish to park at $1$ (``at least'' because it may be that spare capacity comes after it is needed and so, in fact, more cars wish to park at the root).  If $(C_n)_{n \ge 1}$ attains a new minimum at some $m$ then all of the vertices labelled $1, 2, \ldots, m$ must be occupied by a car, and so \emph{exactly} $1-C_m$ cars eventually arrive at $1$ from somewhere in $\{1,2,\ldots,m\}$.  It follows that the number which visit 1 is $1 - \inf_{n \ge 1} C_n$.

Another useful observation will be that $X$ is stochastically increasing in $\alpha$, since if $\alpha < \alpha'$ then we may couple the Poisson numbers of cars $P_v^{(\alpha)}$ and $P_v^{(\alpha')}$ wanting to park at each vertex $v$ in such a way that $P^{(\alpha')}_v \ge P^{(\alpha)}_v$.  It is then easy to see that the number of cars wanting to park at the root must be larger for $\alpha'$.  

Let us now show how Theorem \ref{thm:infiniteParkingProb} follows from Theorem \ref{thm:PoiGWcritical}.
\begin{proofOfTheorem}{thm:infiniteParkingProb}
The process $(C_n)_{n \ge 1}$ is a random walk with initial state $C_0 = 0$ and step-size $1-X_n$ for $n=1,2,\ldots$ The asymptotic behaviour of $(C_n)$ depends entirely on its mean.  Indeed,
\[
\Prob{C_n \geq 0 \text{ for all } n \geq 1} > 0
\]
if and only if $\E{1-X} > 0$, i.e., if and only if $\E{X} < 1$. By Theorem \ref{thm:PoiGWcritical} we see that this occurs if and only if $\alpha < 1/2$.  In that case, $(C_n)_{n \ge 1}$ is a random walk with positive drift which is \emph{skip-free to the right}, i.e., a random walk with
\[
 \E{C_{n+1}-C_n}>0 \quad \text{and} \quad \Prob{C_{n+1}-C_n \geq 2} = 0.                                                                                                                                                 
\]
This enables a particularly convenient calculation of its hitting probabilities.  We obtain (see, e.g., Brown, Pek\"oz and Ross \cite{BrownPekozRoss-skipFreeWalks})
\begin{equation}
\label{eq:skipfree}
\Prob{C_n \ge 0 \text{ for all } n \ge 1} = \frac{\E{C_2-C_1}}{\Prob{C_2-C_1=1}} = \frac{1-\E{X}}{\Prob{X = 0}}.
\end{equation}
 Theorem \ref{thm:infiniteParkingProb} now follows trivially from \eqref{eq:skipfree} since, by Theorem \ref{thm:PoiGWcritical} case (\ref{thm:PoiGWcritical:smallAlpha}), for all $\alpha \in (0,1/2)$ we have
 \[
 \Prob{A_{\alpha}} =  \Prob{C_n \ge 0 \text{ for all } n \ge 1} = \frac{\sqrt{1 - 2\alpha}}{1-\alpha},
 \]
 while for $\alpha \geq 1/2$, by stochastic monotonicity in $\alpha$ we obtain
 \[
\Prob{A_{\alpha}} =   \Prob{C_n \ge 0 \text{ for all } n \ge 1} \leq \inf_{\alpha \in (0,1/2)} \frac{\sqrt{1 - 2\alpha}}{1-\alpha} = 0. \qedhere
 \]
\end{proofOfTheorem}

Having analysed the local weak limit, it remains to prove that the probability that all cars can park behaves continuously with respect to this notion of convergence.

\begin{proofOfTheorem}{thm:L&P}
For an arbitrary rooted tree $(\tau, \rho)$ and arbitrary numbers $\pi = (\pi(v), v \in V(\tau))$ of arrivals at its vertices, write $\chi(\tau,\pi)$ for the number of cars arriving at the root.  We begin by observing the simple fact that $\chi$ is monotone in both of its arguments: 
\begin{itemize}
\item if $\pi(v) \le \pi'(v)$ for all $v \in V(\tau)$ then $\chi(\tau, \pi) \le \chi(\tau, \pi')$;
\item if $\tau$ is a subtree of $\tau'$ (with the same root) and $\pi'$ gives the numbers of arrivals in $\tau'$ then $\chi(\tau, \pi'|_{v \in V(\tau)}) \le \chi(\tau', \pi')$.
\end{itemize}
We wish to prove that
\[
\lim_{n \to \infty} \Prob{A_{n,\alpha}} = \Prob{A_{\alpha}},
\]
where
\[
A_{n,\alpha} = \left\{\chi(T_n, P_{n,\fl{\alpha n}}) \in \{0,1\}\right\} \quad \text{and} \quad A_{\alpha} = \left\{\chi(T, P) \in \{0,1\}\right\}.
\]

First observe that Theorem 4.1 of Luczak and Winkler~\cite{LuczakWinkler} entails that there exists a coupling of the trees $(T_n)_{n \ge 1}$ which is increasing.  (See the discussion below Theorem 2.1 of Lyons, Peled and Schramm~\cite{LyonsPeledSchramm} for how to deduce this from \cite{LuczakWinkler}.)  Let us use this coupling, and take $T$ to be its increasing limit.  For notational simplicity, when convenient we will label the vertices of $T$ by $\NN$, with the vertex labelled $n$ being the vertex which appears for the first time in $T_n$. (Observe that this is \emph{not} the labelling by $[n]$ which makes $T_n$ a uniform labelled tree.)

We now turn to the arrivals processes of cars.  Given $\beta > 0$, let $(P^{(\beta)}(i), i \in \NN)$ be independent and identically distributed Po($\beta$) random variables, independent of $T$, so that
\[
(P(i), i \in \NN) \equidist (P^{(\alpha)}(i), i \in \NN).
\]
We will make use of the following well-known fact about the Poisson distribution: for any $\beta > 0$, conditional on $\sum_{i=1}^n P^{(\beta)}(i) = m$, the joint distribution of $(P^{(\beta)}(1), \ldots, P^{(\beta)}(n))$ is $\text{Multinomial}(m; 1/n, \ldots, 1/n)$.  Indeed, observe that we may realise $P^{(\beta)}(1), \ldots, P^{(\beta)}(n)$ by taking a Poisson point process of intensity $\beta$ on $\R_+$ and taking $P^{(\beta)}(i)$ to be the number of points falling in the interval $(i-1,i]$ for $1 \le i \le n$.  Given the point configuration, suppose that we remove $\left(\sum_{i=1}^n P^{(\beta)}(i) - m\right)^+$ of the points, chosen independently and uniformly at random.  Write $P'(i)$ for the number of remaining points in $(i-1,i]$, for $1 \le i \le n$.  Then on the event
\[
\left\{ \sum_{i=1}^n P^{(\beta)}(i) \ge m\right\},
\]
we have $(P'(1), \ldots, P'(n)) \sim \text{Multinomial}(m;1/n, \ldots, 1/n)$.

\textbf{Case $\alpha < 1/2$, lower bound.}  Let $\beta$ be such that $\alpha < \beta < 1/2$. Let
\[
E'_n = \left\{ \sum_{i=1}^n P^{(\beta)}(i) \ge \fl{\alpha n} \right\}
\]
and note that, by the weak law of large numbers, $\frac{1}{n} \sum_{i=1}^n P^{(\beta)}(i) \convprob \beta$, so that $\Prob{E'_n} \to 1$ as $n \to \infty$.  Initially allocate $P^{(\beta)}(i)$ cars to vertex $i \in \NN$.  Remove $\left(\sum_{i=1}^n P^{(\beta)}(i) - \fl{\alpha n} \right)^+$ cars chosen uniformly at random from among those on vertices in $[n]$, and write $P'_{n,\fl{\alpha n}}(i)$ for the resulting numbers of cars at vertex $i$ for $i \in [n]$.  We clearly have $P'_{n, \fl{\alpha n}} \le P^{(\beta)}(i)$ for all $i \in [n]$.  Moreover, on the event $E'_n$,
\[
\left(P'_{n,\fl{\alpha n}}(i), i \in [n]\right) \equidist \left(P_{n,\fl{\alpha n}}(i), i \in [n]\right).
\]
Hence, on $E'_n$ we have
\[
\chi(T_n, P'_{n,\fl{\alpha n}}) \le \chi(T,P^{(\beta)}).
\]
So for all $n \ge 1$,
\[
\Prob{\chi(T_n, P'_{n,\fl{\alpha n}}) \in \{0,1\}} \ge \Prob{\left\{ \chi(T,P^{(\beta)}) \in \{0,1\} \right\} \cap E'_n} 
\]
and hence
\begin{equation} \label{eqn:lowerbound}
\liminf_{n \to \infty}\Prob{\chi(T_n, P'_{n,\fl{\alpha n}}) \in \{0,1\}} \ge \frac{\sqrt{1-2\beta}}{1-\beta}.
\end{equation}

\textbf{Case $\alpha < 1/2$, upper bound}.  Let $\gamma$ be such that $0 < \gamma < \alpha < 1/2$. We perform an analogous coupling of the arrivals: let
\[
E_n'' = \left\{ \sum_{i=1}^n P^{(\gamma)}(i) \le \fl{\alpha n} \right\}
\]
and note that given $\epsilon > 0$, there exists $n_{\epsilon}$ such that for all $n \ge n_{\epsilon}$ we have $\Prob{E_n''} >1-\epsilon/3$.  Initially allocate $P^{(\gamma)}(i)$ cars to vertex $i \in \NN$.  Add $\left(\fl{\alpha n} - \sum_{i=1}^n P^{(\gamma)}(i) \right)^+$ cars to independent and uniformly chosen vertices in $[n]$ and write $P''_{n,\fl{\alpha n}}(i)$ for the resulting numbers of cars at vertex $i$ for $i \in [n]$.  Clearly we have $P''_{n,\fl{\alpha n}}(i) \ge P^{(\gamma)}(i)$ for all $i \in [n]$.  On the event $E''_n$,
\[
\left(P''_{n,\fl{\alpha n}}(i), i \in [n]\right) \equidist \left(P_{n,\fl{\alpha n}}(i), i \in [n]\right).
\]

Now note that
\[
\chi(T,P^{(\gamma)}|_{B_T(\rho,r)}) \uparrow \chi(T,P^{(\gamma)})
\]
as $r \to \infty$.  Recall the random walk representation for parking on $T$.  We have $\chi(T,P^{(\gamma)}) \equidist 1- \inf_{n \ge 1} C_n$. Since $\gamma < 1/2$, the random walk has positive drift and so $\chi(T,P^{(\gamma)}) < \infty$ almost surely.  Hence, given $\epsilon > 0$, there exists $r_{\epsilon}$ such that for all $r \ge r_{\epsilon}$, we have
\[
\Prob{\chi(T,P^{(\gamma)}|_{B_T(\rho,r)}) \neq \chi(T,P^{(\gamma)})} < \epsilon/3.
\]
Moreover, there exists $n_{\epsilon, r}$ such that for all $n \ge n_{\epsilon,r}$,
\[
\Prob{B_{T}(\rho,r) \neq B_{T_n}(\rho_n,r)} < \epsilon/3.
\]
On the event $\{\chi(T,P^{(\gamma)}|_{B_T(\rho,r)}) = \chi(T,P^{(\gamma)})\} \cap \{B_{T}(\rho,r) = B_{T_n}(\rho_n,r)\} \cap E_n''$, we have
\[
\chi(T,P^{(\gamma)}) = \chi(T, P^{(\gamma)}|_{B_T(\rho,r)}) \le \chi(T_n, P''_{n, \fl{\alpha n}}|_{B_{T_n}(\rho_n,r)}) \le \chi(T_n,P''_{n, \fl{\alpha n}}).
\]
Hence, for $n \ge \max\{n_{\epsilon}, n_{\epsilon,r_{\epsilon}}\}$,
\begin{align*}
& \Prob{\chi(T_n,P''_{n, \fl{\alpha n}}) \in \{0,1\}} \\
&\qquad  \le \Prob{\chi(T,P^{(\gamma)}) \in \{0,1\}} \\
& \qquad \qquad +  \Prob{(E_n'')^c} + \Prob{\chi(T,P^{(\gamma)}|_{B_T(\rho,r_{\epsilon})}) \neq \chi(T,P^{(\gamma)})} + \Prob{B_{T}(\rho,r_{\epsilon}) \neq B_{T_n}(\rho_n,r_{\epsilon})}\\
& \qquad < \frac{\sqrt{1-2\gamma}}{1-\gamma} + \epsilon.
\end{align*}
But $\epsilon > 0$ was arbitrary and so
\begin{equation} \label{eqn:upperbound}
\limsup_{n \to \infty} \Prob{\chi(T_n,P''_{n, \fl{\alpha n}}) \in \{0,1\}} \le \frac{\sqrt{1-2\gamma}}{1-\gamma}.
\end{equation}
\textbf{Case $\alpha < 1/2$}. Now recall that $\gamma$ and $\beta$ were chosen arbitrarily such that $\gamma < \alpha < \beta$.  Using (\ref{eqn:lowerbound}), (\ref{eqn:upperbound}) and the fact that the function $x \mapsto \frac{\sqrt{1-2x}}{1-x}$ is continuous on $(0,1/2]$ with value 0 at $x=1/2$, we obtain
\[
\lim_{n \to \infty} \Prob{A_{n,\alpha}} = \frac{\sqrt{1-2\alpha}}{1-\alpha}
\]
for $\alpha < 1/2$.

\textbf{Case $\alpha \ge 1/2$}.  This follows straightforwardly since, by coupling, for $\alpha \ge 1/2$ we have
\[
\lim_{n \to \infty} \Prob{\chi(T_n, P_{n,\fl{\alpha n}}) \in \{0,1\}} \le \inf_{\gamma < 1/2} \ \lim_{n \to \infty} \Prob{\chi(T_n, P_{n, \fl{\gamma n}}) \in \{0,1\})} = 0. \qedhere
\]
\end{proofOfTheorem}

The rest of the paper is organised as follows. In Section \ref{sec:PoiGWcritical} we prove Theorem \ref{thm:PoiGWcritical}, which is now the only missing piece in our proof of Theorem~\ref{thm:L&P}.  In Section \ref{sec:generalisations} we discuss some generalisations of our results.  In particular, we discuss a related model studied by Jones~\cite{Jones}.

\section{Parking on a critical Poisson Galton--Watson tree}
\label{sec:PoiGWcritical}

The following simple proposition gives us a first piece of information about parking on critical Galton--Watson trees.
\begin{prop}
\label{prop:p0}
Let $\alpha \in (0,1)$ and let $X$ denote the number of cars that arrive at the root of a critical Galton--Watson tree with $\Po(1)$ offspring distribution. We have
\[
 p = \Prob{X=0} \geq \exp(-1-\alpha) > 0.
\]
Moreover, if the solution to the RDE \eqref{eqn:rde} has a finite mean then $p = 1-\alpha$.
\end{prop}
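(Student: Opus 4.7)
The plan is to prove the two assertions separately; both follow from short structural arguments about the recursive distributional equation \eqref{eqn:rde}.

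For the lower bound $p \ge e^{-1-\alpha}$, I would isolate a simple deterministic event on which $X = 0$ is forced: the event that the root has no offspring in the Galton--Watson tree and also receives no cars in the initial arrival step. By construction the offspring count $N \sim \Po(1)$ is independent of the arrival count $P \sim \Po(\alpha)$ at the root, so
\[
\Prob{X = 0} \ge \Prob{N = 0, P = 0} = e^{-1} \cdot e^{-\alpha} = e^{-1-\alpha},
\]
which is strictly positive. The intuition is trivial: if the root has no children then no cars can arrive from below, and if no cars are placed at the root itself, the root is never visited. No further work is needed for this part.

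For the identity $p = 1-\alpha$ under the assumption $\E{X} < \infty$, I would take expectations on both sides of the RDE
\[
X \equidist P + \sum_{i=1}^N (X_i - 1)^+.
\]
Since $N \sim \Po(1)$ is independent of the i.i.d.\ copies $(X_i)$, Wald's identity gives $\E{\sum_{i=1}^N (X_i-1)^+} = \E{N}\,\E{(X-1)^+} = \E{(X-1)^+}$. Next I would use the pointwise identity $(X-1)^+ = X - 1 + \I{X=0}$, valid for any non-negative integer-valued random variable, which yields $\E{(X-1)^+} = \E{X} - 1 + p$. Combining with $\E{P} = \alpha$, the RDE gives $\E{X} = \alpha + \E{X} - 1 + p$.

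The key point — and also the only real subtlety — is that we are allowed to cancel $\E{X}$ from both sides precisely because the hypothesis $\E{X} < \infty$ is in force; otherwise the equation is the trivial $\infty = \infty$ and delivers no information. After cancellation one reads off $p = 1-\alpha$ immediately. There is no serious obstacle here beyond being careful about this finiteness condition, which is exactly why the second statement of the proposition has to be conditional: for $\alpha > 1/2$ we will see $\E{X} = \infty$ and indeed $p > 1-\alpha$, consistent with the breakdown of the cancellation step.
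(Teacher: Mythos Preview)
Your proposal is correct and matches the paper's own proof essentially line for line: the lower bound via $\{N=0,\,P=0\}$ is identical, and the paper likewise takes expectations in the RDE to obtain $\E{X} = \alpha + \E{X} - \Prob{X \ge 1}$, concluding that either $\E{X}=\infty$ or $\Prob{X \ge 1} = \alpha$, i.e.\ $p=1-\alpha$. Your use of the identity $(X-1)^+ = X - 1 + \I{X=0}$ and explicit mention of Wald are just a slightly more detailed presentation of the same step.
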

\begin{proof}
 The lower bound on $p$ follows from the fact that if the root of the Galton--Watson tree has zero children and no cars want to park at it directly then we have $X=0$. Thus
 \[
  p \geq \Prob{N=0, P=0} = \exp(-1)\exp(-\alpha).
 \]
 Now, taking expectations in \eqref{eqn:rde}, we obtain
 \[
  \E{X} = \alpha + \E{X} - \Prob{X \ge 1}
 \]
 so that either $\Prob{X \ge 1} = \alpha$ or $\E{X} = \infty$.
\end{proof}

Let $G(s) = \E{s^X}$, $s \ge 0$, be the probability generating function of $X$. We have
\begin{align}
\label{eq:functionalEquation}
G(s) & = \E{s^P} \E{\E{s^{(X - 1)^+}}^N } \notag \\
& = \exp(\alpha(s-1)) \exp \left(\E{s^{(X-1)^+}} -1 \right) \notag \\
& = \exp(\alpha(s-1) - 1) \exp \left(\E{s^{X-1}} + (1-s^{-1})p \right) \notag \\
& = \exp \left(s^{-1}G(s) + \alpha s - \alpha - 1 + (1 - s^{-1})p \right).
\end{align}

The aim of the lemmas that follow is to show that for $\alpha \leq 1/2$ we indeed have $p = 1-\alpha$, i.e., the value suggested by Proposition \ref{prop:p0}.
\begin{lemma}
\label{lem:pNotTooSmall}
 For any $\alpha \in (0,1)$, we have $p \geq 1-\alpha$.
\end{lemma}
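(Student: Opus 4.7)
The plan is to approximate $X$ from below by truncation, so as to reduce the question to one involving only random variables with guaranteed finite mean. For each $k \geq 0$, let $X_k$ denote the number of cars reaching the root of a $\PGW(1)$ tree truncated at generation $k$, with independent $\Po(\alpha)$ numbers of cars at every vertex. Equivalently, $X_0 \equidist P \sim \Po(\alpha)$ and, for $k \geq 1$,
\[
X_k \equidist P + \sum_{i=1}^N \bigl(X_{k-1}^{(i)} - 1\bigr)^+,
\]
where $P$, $N \sim \Po(1)$ and the i.i.d.\ copies $X_{k-1}^{(i)}$ are independent. Write $p_k = \Prob{X_k = 0}$.

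By coupling all truncations to the same realisation of the tree and of the cars, extending the truncation from generation $k-1$ to generation $k$ only adds cars to the system. The monotonicity of $\chi$ in its arrival argument, noted in the proof of Theorem~\ref{thm:L&P}, then gives $X_{k-1} \leq X_k$ almost surely; since the tree is almost surely finite, $X_k \uparrow X$, whence $p_k \downarrow p$. Moreover $\E{X_k} < \infty$ for every $k$, because $X_k$ is dominated by the total number of cars in the truncated tree, whose expectation is $\alpha(k+1)$ (each of the first $k+1$ generations contributes one vertex in expectation, since the offspring mean equals $1$).

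Finiteness of $\E{X_k}$ legitimises taking expectations in the recursion: using $\E{N} = 1$ and $\E{(X_{k-1}-1)^+} = \E{X_{k-1}} - (1 - p_{k-1})$ we obtain
\[
\E{X_k} - \E{X_{k-1}} = \alpha - 1 + p_{k-1}.
\]
The left-hand side is nonnegative by the almost-sure monotonicity $X_{k-1} \leq X_k$, so $p_{k-1} \geq 1 - \alpha$ for every $k \geq 1$, and letting $k \to \infty$ yields $p \geq 1 - \alpha$.

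The main obstacle avoided here is precisely the one flagged in Proposition~\ref{prop:p0}: applied directly to $X$, the identity $\E{X} = \alpha + \E{X} - \Prob{X \geq 1}$ only delivers $\Prob{X \geq 1} = \alpha$ when $\E{X}$ may be cancelled from both sides, which fails as soon as $\E{X} = \infty$ (and $\E{X} = \infty$ does occur for $\alpha > 1/2$). Truncation replaces $X$ with variables of finite mean, and the stochastic monotonicity of $X_k$ in $k$ transports the one-sided inequality through to the limit.
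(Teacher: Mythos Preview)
Your argument is correct, and it differs substantially from the paper's. The paper proves Lemma~\ref{lem:pNotTooSmall} by differentiating the functional equation~\eqref{eq:functionalEquation} for $G(s)$, obtaining the expression~\eqref{eq:expectationIntermediate} for $G'(s)$, and then arguing by contradiction: if $p<1-\alpha$, the numerator of~\eqref{eq:expectationIntermediate} tends to a negative constant while the denominator tends to $0$ as $s\uparrow 1$, forcing $\E{X}=\infty$; but the sign of the numerator then forces $G(s)>s$ near $1$, which in turn bounds $G'(1-)\le 1$, a contradiction.

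Your route bypasses generating functions entirely. By truncating at generation $k$ you guarantee finite means, so the identity $\E{X_k}-\E{X_{k-1}}=\alpha-1+p_{k-1}$ is rigorous for every $k$, and the coupling $X_{k-1}\le X_k$ turns it directly into $p_{k-1}\ge 1-\alpha$. This is more elementary and more transparently probabilistic; it also generalises with no extra work to any critical offspring distribution and any arrival law $P$ with finite mean (cf.\ Section~\ref{sec:generalisations}). The paper's approach, on the other hand, develops the expression~\eqref{eq:expectationIntermediate} that is reused in Lemma~\ref{lem:E[X]smallAlpha} to compute $\E{X}$ exactly, so its analytic detour is not wasted effort. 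One minor remark on phrasing: the inequality $X_{k-1}\le X_k$ is really the \emph{second} monotonicity bullet for $\chi$ (enlarging the tree), though viewing the level-$k$ truncation with zero arrivals at depth $k$ as a stand-in for the level-$(k-1)$ truncation lets you cast it as arrival monotonicity as you do.
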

\begin{proof}
 Our proof is based on the calculation of the expectation of $X$. To find $\E{X}$ we use Abel's Theorem, which states that $\E{X} = G'(1-)$. Differentiating \eqref{eq:functionalEquation}, we obtain
\[
G'(s) = [-s^{-2} G(s) + s^{-1} G'(s) + \alpha + p s^{-2}] G(s)
\]
and rearranging yields
\begin{equation}
\label{eq:expectationIntermediate}
G'(s) = \frac{(\alpha s^2  + p - G(s))G(s)}{s(s-G(s))}.
\end{equation}
Recall that $X < \infty$ almost surely, so that $G(1) = 1$.  So as $s \to 1$, the limit of the denominator in \eqref{eq:expectationIntermediate} is $0$. If $p < 1-\alpha$, the limit of the numerator is some negative constant. Hence the expectation of $X$ is infinite in absolute value, and since $\E{X} = -\infty$ is impossible, we must have that $G(s)-s$ converges to zero from above. But since $G(s) \le 1$ for $s \in [0,1]$, this implies that, as $s \to 1$, the limit of the derivative of $G(s)$ is at most $1$ i.e.\ $\E{X} \le 1$, contradicting $\E{X} = \infty$. Hence we must have $p \geq 1-\alpha$.
\end{proof}

It remains to show that $p \leq 1-\alpha$ when $\alpha \le 1/2$. This turns out to be more complicated and we need to learn more about the exact form of $G(s)$ in order to achieve it.

Let $W_i$, $i \in \Z$, denote the branches of the Lambert W-function, i.e.\ the branches of the inverse of $f(z) = ze^z$, $z \in \mathbb{C}$. In particular, this implies that for all $i \in \Z$ we have $W_i(z) e^{W_i(z)} = z$. (See, for example, Corless, Gonnet, Hare, Jeffrey and Knuth~\cite{CGHJK-Lambert}.) Recall that
\[
 W_{-1}:[-e^{-1},0) \to (-\infty, -1] \ \mbox{ and } \ W_0:[-e^{-1},\infty) \to (-1,\infty]
\]
are the two real-valued branches of $W$. We shall often use the following property of the Lambert W-function.
\begin{fact}
\label{fact:ex^x}
 For all $x \leq -1$ we have $W_{-1} (x e^x) = x$. 
\end{fact}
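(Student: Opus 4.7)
The plan is essentially to unpack the definition of $W_{-1}$ and use monotonicity of $f(z) = ze^z$ on $(-\infty,-1]$.

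First I would record the basic calculus of $f$ on the real line. Differentiating gives $f'(z) = (1+z)e^z$, which vanishes only at $z = -1$, is negative on $(-\infty,-1)$ and positive on $(-1,\infty)$. Hence $f$ is strictly decreasing on $(-\infty,-1]$, with $f(-1) = -e^{-1}$ and $f(z) \to 0^{-}$ as $z \to -\infty$. Thus $f$ restricted to $(-\infty,-1]$ is a strict bijection onto $[-e^{-1},0)$.

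Next I would invoke the definition of the branch $W_{-1}$ as stated in the paper: $W_{-1}\colon [-e^{-1},0) \to (-\infty,-1]$ is characterised by $W_{-1}(y)\,e^{W_{-1}(y)} = y$ together with the range constraint $W_{-1}(y) \leq -1$. Combined with the monotonicity just recorded, this means $W_{-1}$ is exactly the inverse of $f|_{(-\infty,-1]}$.

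Finally, for any $x \leq -1$, the point $xe^x = f(x)$ lies in $[-e^{-1},0)$, and applying the inverse relation gives $W_{-1}(xe^x) = W_{-1}(f(x)) = x$, which is the claim. The only thing to check carefully is that we are on the correct branch: since $x \leq -1$, this falls in the domain on which $W_{-1}$ (rather than $W_0$) is the inverse of $f$, so no case analysis is required. There is no real obstacle — the fact is essentially a restatement of the definition of $W_{-1}$, included here only to fix notation for later use in Section~\ref{sec:PoiGWcritical}.
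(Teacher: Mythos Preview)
Your proposal is correct and takes essentially the same approach as the paper: both identify that $x \leq -1$ lies in the range of $W_{-1}$ rather than $W_0$, and hence $W_{-1}(xe^x) = x$. The paper phrases this as an elimination (the value is real, so it lies on one of the two real branches, and $W_0 > -1$ rules out $W_0$), while you phrase it via the bijectivity of $f$ on $(-\infty,-1]$; these are the same argument.
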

\begin{proof}
 Let $x < -1$. Obviously, taking $y=x$ we obtain a solution to $ye^y = xe^x$, hence there is some branch $W_i$ of the Lambert W-function such that $W_{i} (x e^x) = x$. Since $x \in \R$, we must have $i=0$ or $i=-1$. However, we know that $W_0(x) > -1$ for all $x \geq -e^{-1}$, so we must have $W_{-1} (x e^x) = x$. We complete the proof of the fact by observing that also $W_{-1} (- e^{-1}) = -1$.
\end{proof}

In the following lemma we show that there are only two possible values that $G(s)$ can take for any $s \in (0,1)$.
\begin{lemma}
 \label{lem:mustBeTheW}
 For all $s \in (0,1]$ we have 
 \begin{equation}
 \label{eq:GwithSomeW}
  G(s) = f_i(s) = -s W_i \left(-\frac{1}{s} \exp \left(\alpha s - \alpha - 1 + (1-s^{-1})p \right) \right)
 \end{equation}
 for some $i = i(s) \in \{0,-1\}$.
\end{lemma}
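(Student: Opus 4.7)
The plan is algebraic: solve the functional equation \eqref{eq:functionalEquation} directly in terms of the Lambert W-function and then verify that the relevant branch is real-valued. Writing $y = G(s)$ and $c = c(s) = \exp(\alpha s - \alpha - 1 + (1-s^{-1})p)$, equation \eqref{eq:functionalEquation} reads $y = c\,e^{y/s}$. Multiplying through by $-1/s$ rearranges this to
\[
\left(-\tfrac{y}{s}\right)\exp\!\left(-\tfrac{y}{s}\right) = -\tfrac{c}{s},
\]
which is exactly the defining relation for the Lambert W-function. Hence $-y/s = W_i(-c/s)$ for some branch index $i \in \mathbb{Z}$, and rearranging gives the form \eqref{eq:GwithSomeW}.

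It remains to show that $i(s) \in \{0,-1\}$, i.e.\ that the branch in play is one of the two real-valued branches of $W$. Since $y = G(s)$ is real, $W_i(-c/s)$ is real, so it suffices to check that $-c/s$ lies in $[-e^{-1},0)$, the common domain of the real branches. Substituting $c = y\,e^{-y/s}$ back in gives $-c/s = x e^x$ with $x = -y/s$. By Proposition \ref{prop:p0}, $G(s) \geq \Prob{X = 0} = p > 0$, so $x < 0$, and the elementary fact that $x \mapsto xe^x$ maps $(-\infty,0)$ into $[-e^{-1},0)$ shows that $-c/s \in [-e^{-1},0)$ as required. On this interval, $w \mapsto we^w$ is two-to-one with the two real preimages given precisely by $W_0$ and $W_{-1}$, so the real quantity $-y/s$ must coincide with one of them, establishing $i(s) \in \{0,-1\}$.

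I do not anticipate any real technical obstacle here: the argument is essentially the definition of the Lambert W-function applied to the functional equation, combined with the observation that $G(s) > 0$ forces the corresponding preimage to be a real branch of $W$. The subtler issue, which this lemma deliberately does not resolve, is to identify \emph{which} of the two branches applies for which values of $s$, and how the answer depends on $\alpha$ and $p$; this is presumably the content of the subsequent lemmas and is what will ultimately let one pin down $p$ and distinguish the two cases of Theorem \ref{thm:PoiGWcritical}. At the endpoint $s = 1$, where $c(1) = e^{-1}$ gives $-c(1) = -e^{-1}$, the two real branches coincide (both equal $-1$), so no branch information is available from continuity at $s = 1$ alone and the choice on $(0,1)$ will need to be pinned down by other means.
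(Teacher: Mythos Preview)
Your proof is correct and follows essentially the same route as the paper: both multiply the functional equation through to put it in the form $xe^x = z$ and then invoke the definition of the Lambert W-function, concluding that only the real branches $W_0$ and $W_{-1}$ are in play because $G(s)$ is real. Your additional verification that $-c/s \in [-e^{-1},0)$ (via $G(s) \ge p > 0$) is a minor but welcome bit of extra care that the paper leaves implicit.
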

\begin{proof}
 Multiplying both sides of \eqref{eq:functionalEquation} by $-s^{-1} \exp \left( -s^{-1}G(s) \right)$ we obtain
 \[
  -s^{-1}G(s) \exp \left( -s^{-1}G(s) \right) = - s^{-1} \exp \left(\alpha s - \alpha - 1 + (1 - s^{-1})p \right).
 \]
By the definition of the Lambert W-function, this implies that
 \[
  -s^{-1}G(s) = W_k \left (- \frac{1}{s} \exp \left(\alpha s - \alpha - 1 + (1 - s^{-1})p \right) \right)
 \]
 for some $k \in \Z$. The lemma then follows from the fact that $G(s)$ must take real values.
\end{proof}

The condition that $G(0) = p > 0$ and the continuity of $G$ allow us to identify that for all $\alpha \in (0,1)$, $G(s) = f_{-1}(s)$ in a neighbourhood of $s=0$.
\begin{lemma}
 \label{lem:W-1AtS=0}
 For all $\alpha \in (0,1)$ there exists some $\varepsilon_\alpha >0$ such that for $s \in (0,\varepsilon_\alpha)$ we have 
 \[
  G(s) = -s W_{-1} \left(-\frac{1}{s} \exp \left(\alpha s - \alpha - 1 + (1-s^{-1})p \right) \right).
 \]
\end{lemma}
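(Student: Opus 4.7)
The plan is to invoke the dichotomy from Lemma~\ref{lem:mustBeTheW} and rule out the branch $f_0$ near $s=0$ by comparing boundary values. Write
\[
z(s) = -\frac{1}{s}\exp\!\left(\alpha s - \alpha - 1 + (1-s^{-1})p\right),
\]
so that, by the preceding lemma, for each $s \in (0,1]$ we have $G(s) \in \{f_0(s), f_{-1}(s)\}$ where $f_i(s) = -sW_i(z(s))$. The idea is that as $s \downarrow 0$, $G(s)$ approaches the strictly positive limit $p$, whereas $f_0(s)$ is forced to go to $0$; so $G(s)$ cannot equal $f_0(s)$ on any sequence $s \downarrow 0$.

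I would first analyse $z(s)$ as $s \downarrow 0$. Since $p > 0$ by Proposition~\ref{prop:p0}, the exponent $\alpha s - \alpha - 1 + p - p/s$ is dominated by the $-p/s$ term, so $\exp(\cdots)$ decays faster than any power of $s$. Consequently $z(s) \to 0^{-}$, and in particular $z(s) \in [-e^{-1},0)$ for $s$ sufficiently small, so both real branches $W_0$ and $W_{-1}$ are defined at $z(s)$.

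Next, I would compute the two candidate limits. Since $W_0$ is continuous on $[-e^{-1},\infty)$ with $W_0(0)=0$, we get $f_0(s) = -sW_0(z(s)) \to 0$ as $s \downarrow 0$. On the other hand, $G$ is a power series with non-negative coefficients and radius of convergence at least $1$, so it is continuous on $[0,1]$ and $G(0) = \Prob{X = 0} = p > 0$. Since $p \neq 0$, there is a neighbourhood $(0,\varepsilon_\alpha)$ on which $|G(s) - p| < p/2$ and $|f_0(s)| < p/2$, forcing $G(s) \neq f_0(s)$ throughout this neighbourhood. By Lemma~\ref{lem:mustBeTheW}, this leaves only $G(s) = f_{-1}(s)$ on $(0,\varepsilon_\alpha)$, as desired.

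No step here is genuinely hard; the only subtlety is that Lemma~\ref{lem:mustBeTheW} is a pointwise statement allowing the branch index $i(s)$ to depend on $s$, so in principle one must worry about oscillation between $f_0$ and $f_{-1}$. The uniform gap between $G(s)$ and $f_0(s)$ on a right-neighbourhood of $0$ (coming from continuity of $G$ and of $W_0 \circ z$) is exactly what prevents this pathology. Alternatively, one could bypass the dichotomy and give a direct verification using Fact~\ref{fact:ex^x}: setting $y = -G(s)/s$, the functional equation \eqref{eq:functionalEquation} rearranges to $ye^y = z(s)$, and for small $s$ we have $y \le -1$ (because $G(s)/s \to \infty$), so Fact~\ref{fact:ex^x} immediately identifies $y = W_{-1}(z(s))$.
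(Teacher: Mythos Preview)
Your proof is correct and follows essentially the same approach as the paper: rule out the branch $f_0$ near $s=0$ by showing $f_0(s)\to 0$ (via $z(s)\to 0^-$ and continuity of $W_0$ at $0$) while $G(0)=p>0$. You are simply more explicit than the paper about the uniform gap on a right-neighbourhood of $0$, and your alternative route through Fact~\ref{fact:ex^x} (observing $-G(s)/s\le -1$ for small $s$) is a nice extra that the paper does not include.
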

\begin{proof}
 To prove the lemma it is enough to show that $\lim_{s \to 0} f_0(s) = 0 \neq p = G(0)$. Indeed, since $p > 0$, we have
 \[
  -\frac{1}{s} \exp \left(\alpha s - \alpha - 1 + (1-s^{-1})p \right) \to 0
 \]
 as $s \to 0$. Since $W_0$ is continuous and satisfies $W_0(0)=0$, this implies $\lim_{s \to 0} f_0(s) = 0$.
\end{proof}
As a check, we observe that $W_{-1}(x) \sim \log(-x)$ for $x \uparrow 0$, and so as $s \downarrow 0$ we have
\[
-s W_{-1} \left(-\frac{1}{s} \exp \left(\alpha s - \alpha - 1 + (1-s^{-1})p \right) \right) \to p.
\]

Both $W_0(s)$ and $W_{-1}(s)$ are defined on $[-e^{-1},\infty)$ and they are equal if and only if $s=-e^{-1}$. For $\alpha \in (0,1/2]$ and $p \geq 1-\alpha$ this allows us to identify $W_{-1}$ as the branch of the Lambert W-function that gives us the formula for $G(s)$ for all $s \in (0,1]$.
\begin{cor}
\label{cor:W_-1forSmallAlpha}
 If $\alpha \leq 1/2$ then
  \begin{equation}
 \label{eq:W-1forSmallAlpha}
  G(s) = -s W_{-1} \left(-\frac{1}{s} \exp \left(\alpha s - \alpha - 1 + (1-s^{-1}) p \right) \right).
 \end{equation}
 for all $s \in (0,1]$.
\end{cor}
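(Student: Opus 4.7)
My plan is to combine three ingredients: Lemma \ref{lem:mustBeTheW}, which forces $G(s)\in\{f_{-1}(s),f_0(s)\}$ pointwise; Lemma \ref{lem:W-1AtS=0}, which anchors $G=f_{-1}$ on an initial interval $(0,\varepsilon_\alpha)$; and continuity of $G$, $f_{-1}$ and $f_0$ on $(0,1]$. Continuity of $G$ is clear because $X<\infty$ almost surely (so $G$ is a genuine probability generating function on $[0,1]$, continuous by Abel's theorem), while continuity of $f_{-1}$ and $f_0$ follows from that of $W_{-1}$ and $W_0$ on $[-e^{-1},0)$ together with smoothness in $s$ of
\[
g(s) := -\tfrac{1}{s}\exp\bigl(\alpha s-\alpha-1+(1-s^{-1})p\bigr).
\]
If I can additionally show that $f_{-1}(s)\neq f_0(s)$ for every $s\in(0,1)$, then a standard connectedness argument will force $G\equiv f_{-1}$ on $(0,1)$, and continuity will handle the endpoint $s=1$.

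Since $W_{-1}$ and $W_0$ coincide only at the branch point $-e^{-1}$, the separation $f_{-1}(s)\neq f_0(s)$ is equivalent to $g(s)>-e^{-1}$, which after an algebraic rearrangement reduces to the single inequality $h(s)>0$ on $(0,1)$, where
\[
h(s) := \log s - \alpha s + \alpha - p + p s^{-1}.
\]
Crucially, $h(1)=0$ (consistent with $g(1)=-e^{-1}$ and $G(1)=1=-W_{-1}(-e^{-1})$), so the task becomes to show $h$ is strictly decreasing on $(0,1)$. A direct computation gives $h'(s)=-(\alpha s^2-s+p)/s^2$, and the quadratic $q(s):=\alpha s^2-s+p$ has its vertex at $s=1/(2\alpha)\ge 1$ since $\alpha\le 1/2$, so $q$ is strictly decreasing on $(0,1]$; moreover $q(1)=\alpha+p-1\ge 0$ by Lemma \ref{lem:pNotTooSmall}. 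Hence $q(s)>q(1)\ge 0$ on $(0,1)$, which gives $h'<0$ and therefore $h(s)>h(1)=0$ on $(0,1)$, as required.

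With the branches separated, the rest is formal: partition $(0,1)=U_{-1}\sqcup U_0$ by setting $U_i=\{s\in(0,1):G(s)=f_i(s)\}$. Both sets are closed in $(0,1)$ by the continuity noted above; together they exhaust $(0,1)$ by Lemma \ref{lem:mustBeTheW}; and they are disjoint because $f_{-1}\neq f_0$ on $(0,1)$. Lemma \ref{lem:W-1AtS=0} makes $U_{-1}$ nonempty (it contains a neighbourhood of $0$), so connectedness of $(0,1)$ forces $U_{-1}=(0,1)$, and letting $s\uparrow 1$ extends the identity to $s=1$. The only non-routine step is the estimate $h>0$ on $(0,1)$, and this is exactly where both hypotheses---$\alpha\le 1/2$ (locating the vertex of $q$) and $p\ge 1-\alpha$ (yielding $q(1)\ge 0$)---are simultaneously indispensable; in particular, when $\alpha>1/2$ the vertex of $q$ drops below $1$ and this monotonicity argument breaks down, which is consistent with the fact that case (\ref{thm:PoiGWcritical:largeAlpha}) of Theorem \ref{thm:PoiGWcritical} genuinely involves both branches.
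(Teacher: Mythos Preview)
Your proof is correct and follows essentially the same route as the paper's: anchor at $s=0$ via Lemma~\ref{lem:W-1AtS=0}, reduce the branch separation on $(0,1)$ to the quadratic inequality $\alpha s^2 - s + p > 0$, and then invoke continuity/connectedness. The only cosmetic difference is that the paper verifies the quadratic inequality by factoring $\alpha s^2 - s + (1-\alpha) = \alpha(s-1)(s-\tfrac{1}{\alpha}+1)$, whereas you argue via the vertex location $1/(2\alpha)\ge 1$ together with $q(1)\ge 0$; these are equivalent.
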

\begin{proof}
 By Lemma \ref{lem:W-1AtS=0}, the corollary holds in some small neighbourhood of $0$. By the continuity of $G(s)$ and of the branches of the W-function, in order to complete the proof it is therefore enough to show that $f_0(s) \neq f_{-1}(s)$ for all $s \in (0,1)$.
 
 To do this, we first observe that the argument of $W$ in \eqref{eq:W-1forSmallAlpha} equals $-e^{-1}$ for $s=1$, so consequently $f_0(1) = f_{-1}(1)$. The corollary will follow if we can show that for all $s \in (0,1)$ we have
 \[
  -\frac{1}{s} \exp \left(\alpha s - \alpha - 1 + (1-s^{-1}) p \right) > -\exp(-1),
 \]
 which is equivalent to
 \[
  g(s) = \alpha s - \alpha + (1-s^{-1}) p < \log s.
 \]
 Since $g(1) = \log(1) = 0$, this will, in turn, follow if $g'(s) > 1/s$ for all $s \in (0,1)$. We have $g'(s) > 1/s$ if
 \[
  \alpha s^2 - s + p > 0.
 \]
 Now, recalling that by Lemma \ref{lem:pNotTooSmall} we have $p \ge 1-\alpha$, we obtain
 \[
\alpha s^2 - s + p \ge \alpha s^2 - s + 1-\alpha = \alpha(s-1) \left (s-\frac{1}{\alpha}+1 \right ),
 \]
and the right-hand side is strictly positive for all $s \in (0,1)$ if $\alpha \leq 1/2$. So we do indeed have $g'(s) > 1/s$ for all $s \in (0,1)$. Hence, for $\alpha \leq 1/2$ the graphs of $f_0(s)$ and $f_{-1}(s)$ do not intersect in $(0,1)$, and since $f_{-1}(s)$ gives the formula for $G(s)$ near $0$, the corollary follows.
\end{proof}

\begin{cor}
 \label{cor:psmallAlpha}
 For all $\alpha \in (0,1/2]$, we have $p = 1-\alpha$.
\end{cor}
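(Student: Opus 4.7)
My plan is to argue by contradiction. By Lemma \ref{lem:pNotTooSmall} we already have $p \ge 1 - \alpha$, so it suffices to rule out $p > 1 - \alpha$. Under this assumption, Corollary \ref{cor:W_-1forSmallAlpha} still applies and gives, on $(0,1]$,
\[
G(s) = -s\,W_{-1}(h(s)), \qquad h(s) = -\frac{1}{s}\exp\bigl(\alpha s - \alpha - 1 + (1-s^{-1})p\bigr).
\]
The critical observation is that $h(1) = -e^{-1}$, precisely the branch point of $W_{-1}$, so the behaviour of $G$ as $s \uparrow 1$ is controlled by the way $h$ leaves that singular point.

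The first concrete step is a logarithmic differentiation: from $\log(-h(s)) = -\log s + \alpha s - \alpha - 1 + (1-s^{-1})p$ one gets $h'(1)/h(1) = -1 + \alpha + p$, hence $h'(1) = -e^{-1}(\alpha + p - 1)$. Under the contradiction hypothesis this is strictly negative, so for $s$ slightly below $1$,
\[
h(s) + e^{-1} = e^{-1}(\alpha + p - 1)(1-s) + O((1-s)^2) > 0,
\]
which places $h(s)$ just above the branch point of $W_{-1}$, in the regime where its square-root singularity governs.

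The main calculation is then to feed the classical Puiseux expansion
\[
W_{-1}(-e^{-1} + u) = -1 - \sqrt{2eu} + O(u), \qquad u \downarrow 0,
\]
(see \cite{CGHJK-Lambert}) into $G(s) = -s\,W_{-1}(h(s))$ and simplify. The square-root terms do not cancel, and after elementary bookkeeping one obtains
\[
G(s) = 1 + \sqrt{2(\alpha + p - 1)(1-s)} + O(1-s) \quad \text{as } s \uparrow 1.
\]
Since $\alpha + p - 1 > 0$ by assumption, and $\sqrt{1-s}$ dominates $1-s$ near $s = 1$, this expansion forces $G(s) > 1$ for all $s$ strictly less than but sufficiently close to $1$. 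This contradicts the elementary fact that the probability generating function of any $\NN \cup \{0\}$-valued random variable maps $[0,1]$ into $[0,1]$, and we conclude $p = 1 - \alpha$. The only non-routine ingredient is the Puiseux expansion of $W_{-1}$ at $-e^{-1}$, which is entirely classical, so I do not anticipate a serious obstacle once Corollary \ref{cor:W_-1forSmallAlpha} is in hand; the rest is careful expansion.
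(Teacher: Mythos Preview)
Your argument is correct, but it proceeds differently from the paper's proof. The paper also argues by contradiction from $p > 1-\alpha$ and also invokes Corollary~\ref{cor:W_-1forSmallAlpha}, but instead of expanding near $s=1$ it locates an \emph{explicit} interior point $s^* = (1-p)/\alpha \in (0,1)$ at which a short algebraic computation shows the argument of $W_{-1}$ equals $-(s^*)^{-1}\exp(-(s^*)^{-1})$; Fact~\ref{fact:ex^x} then gives $G(s^*) = 1$, contradicting $\Prob{X=0} < 1$. So the paper's route is purely algebraic and avoids any asymptotic analysis of $W_{-1}$, while yours is analytic, feeding the Puiseux expansion of $W_{-1}$ at the branch point into $G$. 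Your approach buys a clearer picture of \emph{why} things go wrong---the square-root singularity at $s=1$ is precisely what later produces the $\sqrt{1-2\alpha}$ in Lemma~\ref{lem:E[X]smallAlpha}---whereas the paper's approach is shorter and needs nothing beyond Fact~\ref{fact:ex^x}. Both reach the same type of contradiction ($G$ attaining a value $\ge 1$ on $(0,1)$).
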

\begin{proof}
 By Corollary \ref{cor:W_-1forSmallAlpha} we have $G(s) = f_{-1}(s)$ for all $s \in (0,1]$. Suppose that $p > 1-\alpha$. Then $s^* = (1-p)/\alpha \in (0,1)$ and so $-1/s^* < -1$. Since also
 \begin{align*}
  \alpha s^* - \alpha - 1 + \left (1- \frac{1}{s^*} \right ) p & = 1-p - \alpha - 1 + \frac{1-p-\alpha}{1-p}p \\
   & = \frac{-p-\alpha+p^2+\alpha p + p - p^2-\alpha p}{1-p} \\
   & = \frac{-\alpha}{1-p} = -\frac{1}{s^*},
 \end{align*}
by plugging $s=s^*$ into \eqref{eq:W-1forSmallAlpha} by Fact \ref{fact:ex^x} we obtain $G(s^*)=1$. This is a contradiction since we do not have $\Prob{X=0} = 1$. Hence we must have $p = 1-\alpha$.
\end{proof}

Once we know that for $\alpha \leq 1/2$ we have $p = 1-\alpha$, we can also find $\E{X}$.
\begin{lemma}
\label{lem:E[X]smallAlpha}
 For $\alpha \in (0,1/2]$, we have $\E{X} = 1 - \sqrt{1 - 2\alpha}$.
\end{lemma}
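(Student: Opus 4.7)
The plan is to compute $\E{X}$ directly from the explicit formula provided by Corollary \ref{cor:W_-1forSmallAlpha}, via Abel's theorem ($\E{X} = G'(1-)$). Writing
\[
\Phi(s) = -\frac{1}{s}\exp\left(\alpha s - \alpha - 1 + (1-s^{-1})(1-\alpha)\right),
\]
so that $G(s) = -sW_{-1}(\Phi(s))$, the first step is to Taylor expand $\Phi$ around $s = 1$. Since $\Phi(1) = -e^{-1}$, the argument lies exactly at the branch point of $W_{-1}$, and the behaviour of $G$ near $s=1$ is governed by how quickly $\Phi(s)$ approaches $-e^{-1}$. A direct expansion of $1/s$ and of the exponential, followed by the multiplication of the two series, shows that the linear term in $1-s$ cancels (reflecting the identity $\alpha + (1-\alpha) = 1$), leaving
\[
\Phi(s) + e^{-1} = e^{-1}\left(\tfrac{1}{2} - \alpha\right)(1-s)^2 + O\bigl((1-s)^3\bigr).
\]

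The second step is to invoke the standard branch-point expansion
\[
W_{-1}(-e^{-1} + u) = -1 - \sqrt{2eu} + O(u) \qquad \text{ as } u \downarrow 0
\]
(see \cite{CGHJK-Lambert}). For $\alpha < 1/2$ the leading coefficient above is strictly positive, so substituting $u = \Phi(s) + e^{-1}$ yields $W_{-1}(\Phi(s)) = -1 - \sqrt{1-2\alpha}\,(1-s) + O\bigl((1-s)^2\bigr)$ as $s \uparrow 1$. Multiplying by $-s$ gives
\[
G(s) = 1 - \bigl(1-\sqrt{1-2\alpha}\bigr)(1-s) + O\bigl((1-s)^2\bigr),
\]
and Abel's theorem then yields $\E{X} = 1 - \sqrt{1-2\alpha}$ for every $\alpha \in (0,1/2)$.

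For the critical value $\alpha = 1/2$ the quadratic coefficient above vanishes and one more term of the expansion is required; a brief further computation gives $\Phi(s) + e^{-1} = \tfrac{1}{6e}(1-s)^3 + O\bigl((1-s)^4\bigr)$, so that $G(1-t) = 1 - t + O(t^{3/2})$ as $t \downarrow 0$ and hence $\E{X} = G'(1-) = 1$, in agreement with $1 - \sqrt{1-2\cdot 1/2} = 1$. Alternatively, the value at $\alpha = 1/2$ can be recovered from the case $\alpha < 1/2$ using the stochastic monotonicity of $X$ in $\alpha$ (noted in the discussion preceding the proof of Theorem \ref{thm:infiniteParkingProb}) together with monotone convergence along $\alpha \uparrow 1/2$.

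The main obstacle is the bookkeeping in the first step: identifying and verifying the cancellation of the linear term in $\Phi(s)+e^{-1}$. Once this cancellation is established, the square-root singularity of $W_{-1}$ at $-e^{-1}$ converts the $(1-s)^2$ behaviour of $\Phi(s)+e^{-1}$ into the linear behaviour of $1-G(s)$ in $(1-s)$, producing the characteristic $\sqrt{1-2\alpha}$ factor that encodes the phase transition at $\alpha = 1/2$.
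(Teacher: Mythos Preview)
Your proof is correct and takes a genuinely different route from the paper's. The paper does not go back to the explicit Lambert $W$ representation at all: instead it substitutes $p=1-\alpha$ into the differential identity \eqref{eq:expectationIntermediate}, observes that both numerator and denominator vanish at $s=1$, applies L'H\^opital's rule to obtain the self-referential relation $G'(1-) = (2\alpha - G'(1-))/(1 - G'(1-))$, solves the resulting quadratic $G'(1-)^2 - 2G'(1-) + 2\alpha = 0$, and finally appeals to stochastic monotonicity of $X$ in $\alpha$ to select the root $1-\sqrt{1-2\alpha}$ rather than $1+\sqrt{1-2\alpha}$.

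Your approach trades the L'H\^opital step for the branch-point expansion of $W_{-1}$ at $-e^{-1}$. This is more computational (the cancellation of the linear term in $\Phi(s)+e^{-1}$ must be checked by hand), but it buys two things. First, it yields a genuine local expansion of $G$ near $s=1$ rather than only the value $G'(1-)$. Second, and more notably, no sign choice is needed: the expansion $W_{-1}(-e^{-1}+u) = -1 - \sqrt{2eu} + O(u)$ already carries the correct sign, so the monotonicity argument the paper uses to pick the root becomes unnecessary. Conversely, the paper's approach is lighter in that it needs only the functional equation and $p=1-\alpha$, not the explicit $W_{-1}$ formula or special-function asymptotics. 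Both arguments are short and valid; yours makes the mechanism behind the $\sqrt{1-2\alpha}$ factor (square-root branch point meeting quadratic vanishing of $\Phi+e^{-1}$) more transparent.
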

\begin{proof}
 By \eqref{eq:expectationIntermediate} and Corollary \ref{cor:psmallAlpha} we have
 \[
  G'(s) = \frac{(\alpha s^2  + 1-\alpha - G(s))G(s)}{s(s-G(s))}.
 \]
 Since both numerator and denominator tend to $0$ as $s \uparrow 1$, we apply L'H\^opital's rule to see that
\[
\lim_{s \uparrow 1} \frac{\alpha s^2  + 1 - \alpha - G(s)}{s-G(s)} = \lim_{s \uparrow 1} \frac{2 \alpha s - G'(s)}{1 - G'(s)} = \frac{2 \alpha - G'(1-)}{1 - G'(1-)},
\]
which gives the relation
\[
G'(1-) = \frac{2 \alpha - G'(1-)}{1 - G'(1-)}.
\]
Rearranging, we obtain
\[
G'(1-)^2 - 2 G'(1-) + 2 \alpha = 0
\]
and so $G'(1-) = 1 \pm \sqrt{1 - 2\alpha}$.  Since $X$ is stochastically increasing in $\alpha$, we have that $\E{X}$ is an increasing function of $\alpha$. So this identifies $\E{X} = 1 - \sqrt{1 - 2\alpha}$.
\end{proof}
Equipped with Lemma~\ref{lem:E[X]smallAlpha} and Corollary \ref{cor:psmallAlpha} we can also deduce that $\E{X} = \infty$ when $\alpha > 1/2$.
\begin{cor}
 \label{cor:E[X]largeAlpha}
 For $\alpha > 1/2$ we have $\E{X} = \infty$.
\end{cor}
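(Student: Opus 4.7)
The plan is a short argument by contradiction that reuses almost nothing beyond the calculation already done in the proof of Lemma \ref{lem:E[X]smallAlpha}. First, I would suppose, for some $\alpha > 1/2$, that $\E{X} < \infty$. Taking expectations in the RDE \eqref{eqn:rde}, exactly as at the end of the proof of Proposition \ref{prop:p0}, immediately yields
\[
 \E{X} = \alpha + \E{X} - \Prob{X \ge 1},
\]
so under the finiteness hypothesis one must have $p = \Prob{X = 0} = 1-\alpha$.

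Once $p = 1 - \alpha$ is known, the identity \eqref{eq:expectationIntermediate} reduces to
\[
 G'(s) = \frac{(\alpha s^2 + 1 - \alpha - G(s)) G(s)}{s(s - G(s))},
\]
with both numerator and denominator vanishing at $s = 1$ (since $G(1) = 1$). I would then run the same L'H\^opital step as in the proof of Lemma \ref{lem:E[X]smallAlpha} --- which uses only $p = 1 - \alpha$, $G(1) = 1$, and the fact that $G'(s) \to G'(1-) = \E{X}$ as $s \uparrow 1$ by Abel's theorem --- to arrive at the quadratic relation
\[
 G'(1-)^2 - 2 G'(1-) + 2 \alpha = 0.
\]
The crucial observation is that its discriminant equals $4 - 8\alpha$, which is strictly negative for $\alpha > 1/2$, so no real $G'(1-)$ can satisfy it. This contradicts the assumption that $G'(1-) = \E{X}$ is a finite real number, and the desired conclusion $\E{X} = \infty$ follows.

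There is essentially no serious obstacle: the derivation of the quadratic in Lemma \ref{lem:E[X]smallAlpha} never invoked the particular identification $G = f_{-1}$ that is available only for $\alpha \le 1/2$, only the value of $p$; and the L'H\^opital step is justified exactly as before because a probability generating function with finite mean has $G'(s)$ continuous from the left at $1$. Conceptually, the same argument can be phrased as saying that the real branch $1 - \sqrt{1-2\alpha}$ which pins $\E{X}$ in the subcritical phase simply ceases to exist as $\alpha$ crosses $1/2$, and the mean must therefore blow up.
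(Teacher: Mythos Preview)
Your argument is correct and is essentially identical to the paper's own proof, just organised as a contradiction rather than as a case split: the paper argues that either $p = 1-\alpha$, in which case the L'H\^opital computation from Lemma~\ref{lem:E[X]smallAlpha} forces $G'(1-)$ to be complex (hence $\E{X}=\infty$), or $p \neq 1-\alpha$, in which case Proposition~\ref{prop:p0} gives $\E{X}=\infty$ directly. Your contrapositive packaging of the same two ingredients is arguably slightly cleaner.
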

\begin{proof}
 Obviously $\E{X}$ is either a positive real constant or $\infty$. By the same argument as in the proof of Lemma~\ref{lem:E[X]smallAlpha} we see that if $p = 1-\alpha$ then $G'(1)$ is either infinite in absolute value or complex, and so $\E{X}$ must be $\infty$. If however $p \neq 1-\alpha$ then by Proposition \ref{prop:p0} we again have $\E{X} = \infty$.
\end{proof}

Theorem \ref{thm:PoiGWcritical} case (\ref{thm:PoiGWcritical:smallAlpha}) now follows immediately from Corollary \ref{cor:psmallAlpha}, Lemma~\ref{lem:E[X]smallAlpha} and Corollary \ref{cor:W_-1forSmallAlpha}, and Theorem \ref{thm:PoiGWcritical} case (\ref{thm:PoiGWcritical:largeAlpha}) is Corollary \ref{cor:E[X]largeAlpha}.

Before moving on to the proof of Theorem \ref{thm:infiniteParkingProb}, let us discuss the case $\alpha > 1/2$ a bit further. We shall find this useful in Section \ref{sec:generalisations} where we look at other related models.

We first show that if $\alpha > 1/2$ then we have $p > 1-\alpha$ (note that by Proposition \ref{prop:p0} this also implies that $\E{X} = \infty$ for $\alpha > 1/2$).
\begin{lemma}
\label{lem:plargeAlpha}
 If $\alpha > 1/2$ then $p > 1-\alpha$.
\end{lemma}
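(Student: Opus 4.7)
The plan is to argue by contradiction: suppose $p = 1-\alpha$. Rewrite the functional equation \eqref{eq:functionalEquation} in the form
\[
\left(-s^{-1}G(s)\right)\exp\!\left(-s^{-1}G(s)\right) = -s^{-1}\exp(u(s)),
\]
with $u(s) = \alpha s - \alpha - 1 + (1-s^{-1})p$. Since $G$ is a probability generating function, $G(s) > 0$ on $(0,1]$, so $z := -s^{-1}G(s)$ is strictly negative. The real map $z \mapsto ze^{z}$ restricted to $(-\infty, 0)$ has range exactly $[-e^{-1}, 0)$, so the right-hand side must satisfy $-s^{-1}e^{u(s)} \ge -e^{-1}$. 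Equivalently, writing $g(s) := u(s) + 1 = \alpha s - \alpha + (1-s^{-1})p$, one obtains the necessary condition
\[
g(s) \le \log s \quad \text{for all } s \in (0,1].
\]

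The idea is then to study $h(s) := g(s) - \log s$ near $s = 1$. A direct computation gives $h(1) = 0$, $h'(1) = \alpha + p - 1$, and $h''(1) = 1 - 2p$. Under the assumption $p = 1-\alpha$, this yields $h'(1) = 0$ and $h''(1) = 2\alpha - 1$, which is \emph{strictly positive} whenever $\alpha > 1/2$. Consequently, $s = 1$ is a strict local minimum of $h$ with $h(1) = 0$, so $h(s) > 0$ throughout some punctured neighbourhood of $1$ contained in $(0,1]$. This violates the necessary condition $h \le 0$ on $(0,1]$ just established, giving the required contradiction. Combined with Lemma~\ref{lem:pNotTooSmall}, one concludes $p > 1-\alpha$.

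The main obstacle is pinning down the necessary condition $g(s) \le \log s$ cleanly; the key observation is simply that $G(s)$ is a real, positive value that must satisfy the Lambert-type equation $ze^{z} = -s^{-1}e^{u(s)}$ with $z$ real, forcing the right-hand side into $[-e^{-1}, 0)$, i.e.\ into the real domain of the branches $W_{0}$ and $W_{-1}$. Once this domain restriction is in place, the second-order Taylor expansion of $h$ at $s = 1$ transparently identifies $\alpha = 1/2$ as the exact threshold at which the candidate value $p = 1-\alpha$ becomes incompatible with the functional equation, which is precisely the mechanism behind the discontinuous phase transition observed in \eqref{eqn:discphtr}.
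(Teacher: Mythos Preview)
Your proof is correct and follows essentially the same approach as the paper: both argue by contradiction that if $p = 1-\alpha$ then the argument of the Lambert $W$-function falls below $-e^{-1}$ for $s$ just below $1$, which is impossible since $G(s)$ must be real. The only difference is cosmetic---you analyse $h(s) = g(s) - \log s$ directly, whereas the paper differentiates $h_p(s) = -s^{-1}\exp(g_p(s)) = -e^{-1}e^{h(s)}$; since the two are monotone transforms of one another, the critical points and the sign of the second derivative at $s=1$ coincide, and your computation is marginally cleaner for avoiding the chain rule through the exponential.
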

\begin{proof}
 We prove the lemma by showing that for $\alpha > 1/2$ and $p = 1-\alpha$, the value of the argument of $W_i$ in \eqref{eq:GwithSomeW} is less than $-e^{-1}$ for $s \in (1-\varepsilon_\alpha,1)$ for some $\varepsilon_\alpha > 0$. Since $W_{-1}(s)$ and $W_0(s)$, the real branches of the W-function, are only defined for $s \geq -e^{-1}$, together with Lemma \ref{lem:mustBeTheW} this gives us a contradiction.
 
 Indeed, let
 \begin{align*}
  g_{p}(s) & = \alpha s - \alpha -1 + \left ( 1-s^{-1} \right ) p, \\
  h_{p}(s) & = -s^{-1} \exp \left (g_{p}(s) \right ),
 \end{align*}
 so that \eqref{eq:GwithSomeW} can be rewritten as $G(s) = -s W_i (h_{p}(s))$ for some $i = i(s) \in \{0,-1\}$.
 
 We clearly have $g_{p}(1) = -1$ and $h_{p}(1) = -e^{-1}$. Also,
 \begin{equation}
 \label{eq:hDerivative}
  h'_{p}(s) = \exp \left (g_{p}(s) \right ) \left ( s^{-2} - s^{-1} \left (\alpha + p s^{-2}  \right ) \right ),
 \end{equation}
 which implies that $h'_{1-\alpha}(1) = 0$. We also see that
 \begin{align*}
  h''_{p}(s) & = \exp \left (g_{p}(s) \right ) \left ( -2s^{-3} +  \alpha s^{-2} + 3 p s^{-4}  + (\alpha + p s^{-2}  ) \left ( s^{-2} - s^{-1} \left (\alpha + p s^{-2}  \right )  \right ) \right ) \\
             & = \exp \left (g_{p}(s) \right ) \left ( - \alpha^2 s^{-1}  + 2\alpha s^{-2} - (2 + 2\alpha p ) s^{-3} + 4p s^{-4}  - p^2 s^{-5}  \right ).
 \end{align*}
 This gives
 \begin{align*}
  h''_{1-\alpha}(1) & = e^{-1} (-\alpha^2 + 2\alpha - 2 - 2\alpha + 2\alpha^2 + 4 - 4\alpha -1 + 2\alpha - \alpha^2) \\
                    & = e^{-1} (1 - 2\alpha ) < 0
 \end{align*}
for $\alpha > 1/2$. Hence, as clearly $h'''_{1-\alpha}(s) < \infty$ around $s=1$, $h_{1-\alpha}(s) < -e^{-1}$ for $s<1$ large enough. This completes the proof of the lemma.
\end{proof}

Since for $\alpha > 1/2$ we have $p > 1-\alpha$, let us again look at $s^* = (1-p)/\alpha \in (0,1)$. We have $g_p(s^*) = -(s^*)^{-1}$ and so $h_p(s^*) = -(s^*)^{-1} \exp(-(s^*)^{-1})$.  By Fact \ref{fact:ex^x}, we see that 
\[
f_{-1}(s^*) = -s^* W_{-1}(-(s^*)^{-1} \exp(-(s^*)^{-1})) = 1
\]
and since a probability generating function may not take the value 1 for $s \in (0,1)$, we cannot have $G(s^*) = f_{-1}(s^*)$. Hence we must have $G(s^*) = f_{0}(s^*)$. In the following lemma we prove a considerably stronger result about the structure of $G(s)$ when $\alpha > 1/2$.

\begin{lemma}
\label{lem:branchSwitch}
Let $\alpha > 1/2$. Then there is some $s' \in (0, s^*)$ such that $G(s) = f_{-1}(s)$ if $s<s'$ and $G(s) = f_0(s)$ if $s \geq s'$.
\end{lemma}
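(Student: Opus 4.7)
The plan is to locate the unique $s' \in (0,1)$ at which the two branch formulas $f_{-1}$ and $f_0$ can possibly agree, and then to use the continuity of $G$ together with Lemma~\ref{lem:mustBeTheW} to conclude that this $s'$ is the switch point. Because the two real branches of the Lambert $W$-function coincide only at $-e^{-1}$, we have $f_{-1}(s) = f_0(s)$ if and only if $h_p(s) = -e^{-1}$. Lemma~\ref{lem:W-1AtS=0} gives $G = f_{-1}$ near $s = 0$, while the discussion just preceding the lemma shows $G(s^*) = f_0(s^*)$; hence by continuity of $G$ the switch from $f_{-1}$ to $f_0$ must occur at some $s' \in (0, s^*]$ with $h_p(s') = -e^{-1}$.

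Next I would analyse the function $h_p$. From \eqref{eq:hDerivative} the sign of $h_p'(s)$ is opposite to that of the quadratic $Q(s) = \alpha s^2 - s + p$. If $Q$ had no real roots (i.e.\ $p \geq 1/(4\alpha)$), then $h_p$ would be strictly decreasing on $(0,\infty)$; combined with $h_p(0^+) = 0$ and $h_p(1) = -e^{-1}$ this gives $h_p > -e^{-1}$ on $(0,1)$, contradicting the existence of the required switch point in $(0, s^*] \subset (0, 1)$. So $Q$ has two distinct real roots $s_- < s_+$, and $h_p$ decreases on $(0,s_-)$, increases on $(s_-, s_+)$, and decreases on $(s_+, \infty)$. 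Since $p > 1-\alpha$ by Lemma~\ref{lem:plargeAlpha}, we have $Q(1) = \alpha + p - 1 > 0$, so $1 \notin (s_-, s_+)$; combined with $s_- + s_+ = 1/\alpha < 2$ this rules out $s_- > 1$, so in fact $0 < s_- < s_+ < 1$. On $(s_+, 1]$ the function $h_p$ strictly decreases to $h_p(1) = -e^{-1}$, giving $h_p > -e^{-1}$ on $(s_+, 1)$; consequently the only point in $(0,1)$ at which $h_p$ can take the value $-e^{-1}$ is the local minimum $s_-$. Because $G$ is real-valued (forcing $h_p \geq -e^{-1}$ throughout) and a switch in $(0,s^*]$ must exist, this pins down $h_p(s_-) = -e^{-1}$ and identifies the switch point as $s' = s_- = (1-\sqrt{1-4\alpha p})/(2\alpha) = s_p$.

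The remaining checks are routine. To see $s' < s^*$, one computes $s^* - s_p = \bigl((1-2p) - \sqrt{1-4\alpha p}\bigr)/(2\alpha)$; since $\alpha > 1/2$ and $p < 1/(4\alpha) < 1/2$ we have $1 - 2p > 0$, and squaring reduces the inequality to $(1-2p)^2 > 1 - 4\alpha p$, i.e.\ $4p(p + \alpha - 1) > 0$, which holds since $p > 1-\alpha$. Finally, $h_p > -e^{-1}$ strictly on $(s', 1)$, so the two branch formulas remain distinct there, continuity of $G$ rules out any further switch, and the equality $G(s^*) = f_0(s^*)$ together with the definition of $s'$ as the first switch point gives $G = f_0$ on $[s',1]$ and $G = f_{-1}$ on $(0, s')$. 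The main subtlety is the indirect way in which $p$ is pinned down: rather than solving for $p$ directly, one uses the forced existence of a switch together with the shape of $h_p$ to extract the implicit equation $h_p(s_p) = -e^{-1}$, which is precisely the characterisation of $p$ in Theorem~\ref{thm:PoiGWcritical}(\ref{thm:PoiGWcritical:largeAlpha}).
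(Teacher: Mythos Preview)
Your argument is correct and follows essentially the same route as the paper: use $G=f_{-1}$ near $0$, $G=f_0$ at $s^*$, locate the unique point in $(0,1)$ where $h_p=-e^{-1}$ via the shape of $h_p$ (which you analyse more explicitly than the paper does, thereby absorbing the content of the paper's Corollaries~\ref{cor:p0largeAlphaBounds} and~\ref{cor:s'Value} into the proof), and conclude by continuity. One small arithmetic slip: in your check that $s'<s^*$ you write $s^*-s_p=\bigl((1-2p)-\sqrt{1-4\alpha p}\bigr)/(2\alpha)$, but the correct expression has a $+\sqrt{1-4\alpha p}$, so the inequality is in fact immediate and the squaring step is unnecessary.
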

\begin{proof}
We prove the lemma by analysing the function $h_{p}(s)$ defined in the proof of Lemma \ref{lem:plargeAlpha}. Since for $\alpha > 1/2$ we cannot have $G(s) = f_{-1}(s)$ for all $s \in (0,1)$, there must be some $s' \in (0,1)$ such that $h_{p}(s') = -e^{-1}$ (as this is the only way for the two branches of the Lambert W-function to meet in $(0,1))$. In fact, $s'$ must be a turning point for $h_{p}(s)$ to make sure that we have a real solution for all $s \in (0,1)$.

By \eqref{eq:hDerivative}, we immediately see that there are at most two real solutions to $h'_{p}(s) = 0$. Hence $h_{p}(s)$ has at most two turning points in $(0,1)$, and since we also have $h_p(1)=-e^{-1}$, $s'$ is the only solution to $h_{p}(s') = -e^{-1}$ in $(0,1)$. By Lemma \ref{lem:W-1AtS=0} we have that $G(s) = f_{-1}(s)$ for $s \in (0,\varepsilon_\alpha)$, and we know that $G(s^*) = f_0(s^*)$, so this implies that $G(s) = f_{-1}(s)$ for $s<s'$ and $G(s) = f_0(s)$ for $s \geq s'$.
\end{proof}

\begin{cor}
\label{cor:p0largeAlphaBounds}
Let $\alpha > 1/2$. Then $p \in (1-\alpha, \frac{1}{4 \alpha})$.
\end{cor}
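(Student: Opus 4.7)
The lower bound $p > 1-\alpha$ is precisely Lemma~\ref{lem:plargeAlpha}, so the plan focuses on the upper bound $p < \frac{1}{4\alpha}$.

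The first observation is that Lemma~\ref{lem:branchSwitch} already gives $p \leq \frac{1}{4\alpha}$ almost for free. Its proof produces a turning point $s' \in (0, s^*) \subset (0,1)$ of $h_p$, i.e.\ a point with $h'_p(s') = 0$. Rewriting \eqref{eq:hDerivative} in the factored form
\[
h'_p(s) = -\exp(g_p(s))\, s^{-3}(\alpha s^2 - s + p),
\]
the condition $h'_p(s') = 0$ amounts to $\alpha (s')^2 - s' + p = 0$, and the existence of a real root of this quadratic requires its discriminant $1 - 4\alpha p$ to be nonnegative, yielding $p \leq \frac{1}{4\alpha}$.

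The main obstacle is ruling out equality in the boundary case $p = \frac{1}{4\alpha}$. There the quadratic becomes $\alpha\bigl(s - \tfrac{1}{2\alpha}\bigr)^2$, which is nonnegative with only a double root, so $h'_p(s) \leq 0$ throughout $(0,\infty)$ with equality only at the isolated point $s = \frac{1}{2\alpha}$. Hence $h_p$ is strictly decreasing on $(0,1)$, and together with $h_p(1) = -e^{-1}$ this gives $h_p(s) > -e^{-1}$ for every $s \in (0,1)$. The real branches $W_{-1}$ and $W_0$ therefore never coincide on $(0,1)$, so Lemma~\ref{lem:W-1AtS=0}, which identifies $G = f_{-1}$ in a right-neighbourhood of $0$, together with the continuity of $G$ forces $G \equiv f_{-1}$ on all of $(0,1)$. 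At the point $s^* = (1-p)/\alpha$, which lies strictly inside $(0,1)$ since $\alpha > 1/2$, the identity $g_p(s^*) = -1/s^*$ from the proof of Corollary~\ref{cor:psmallAlpha} combined with Fact~\ref{fact:ex^x} gives $f_{-1}(s^*) = 1$, contradicting the fact that a nondegenerate probability generating function cannot attain the value $1$ strictly inside the open unit interval. This contradiction will upgrade the weak bound to the desired strict inequality $p < \frac{1}{4\alpha}$.
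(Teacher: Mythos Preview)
Your proof is correct. Both the weak bound (one turning point of $h_p$ forces the discriminant of $\alpha s^2 - s + p$ to be nonnegative) and the separate case analysis ruling out $p = \tfrac{1}{4\alpha}$ are valid; in the boundary case your observation that $h_p$ becomes strictly decreasing, so that $G \equiv f_{-1}$ on $(0,1)$, combined with $f_{-1}(s^*) = 1$ at $s^* \in (0,1)$ yields the desired contradiction.

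The paper reaches the strict inequality in a single step by arguing that $h_p$ must have \emph{two} distinct turning points in $(0,1)$. In addition to the branch-switch point $s'$ (a local minimum with $h_p(s') = -e^{-1}$), the condition $G(1)=1$ forces $h_p(1) = -e^{-1}$; since $h_p$ rises from its minimum value $-e^{-1}$ at $s'$ and must return to that same value at $s=1$, there is a local maximum in $(s',1)$. Two distinct real roots of $\alpha s^2 - s + p = 0$ then give a strictly positive discriminant, hence $p < \tfrac{1}{4\alpha}$ directly. Your route trades this two-turning-points observation for a direct contradiction at the boundary, reusing the $f_{-1}(s^*) = 1$ argument already appearing just before Lemma~\ref{lem:branchSwitch}. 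Both approaches rest on the same ingredients (Lemma~\ref{lem:branchSwitch}, the factorisation of $h'_p$, Fact~\ref{fact:ex^x}), so the difference is organisational rather than conceptual; the paper's version is a little shorter, yours a little more explicit about why equality fails.
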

\begin{proof}
 We have $p > 1-\alpha$ by Lemma \ref{lem:plargeAlpha}. We also know that for $\alpha > 1/2$ the two functions $f_{-1}(s)$ and $f_{0}(s)$ must meet in $(0,1)$, and so there is some $s' \in (0,1)$ such that $h_{p}(s') = -e^{-1}$ and $s'$ is a turning point for $h_{p}(s)$. However, we also must have $h_{p}(1) = -e^{-1}$, as $G(1) = -W_i (h_{p}(1)) = 1$. Hence $h_{p}(s)$ must have two turning points in $(0,1)$, which by \eqref{eq:hDerivative} implies that there must be two solutions to
 \[
  \alpha s^2 - s + p = 0.
 \]
 This implies that $1 - 4 \alpha p > 0$, and the bound $p < \frac{1}{4 \alpha}$ follows.
\end{proof}

\begin{cor}
\label{cor:s'Value}
 The value of $s'$ in Lemma \ref{lem:branchSwitch} is
 \[
  s' = \frac{1-\sqrt{1-4 p \alpha}}{2 \alpha}.  
 \]
\end{cor}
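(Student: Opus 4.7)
The plan is to identify $s'$ directly as a critical point of $h_p$ using the derivative formula \eqref{eq:hDerivative}, and then pick out which of the two critical points it must be.

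Recall from the proof of Lemma \ref{lem:branchSwitch} that $s' \in (0,1)$ is characterized by $h_p(s') = -e^{-1}$ and, moreover, that $s'$ is a turning point of $h_p$ (since the real branches $W_{-1}$ and $W_0$ are only defined on $[-e^{-1},\infty)$, the curve $h_p$ cannot cross the level $-e^{-1}$ transversally on $(0,1)$ while keeping $G$ real). So I would compute $h_p'(s) = 0$. From \eqref{eq:hDerivative},
\[
 h'_{p}(s) = \exp(g_p(s))\,\bigl(s^{-2} - s^{-1}(\alpha + ps^{-2})\bigr) = -\exp(g_p(s))\, s^{-3}(\alpha s^2 - s + p),
\]
and since the exponential factor is strictly positive, the turning points of $h_p$ are precisely the roots of the quadratic
\[
 \alpha s^2 - s + p = 0.
\]

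By Corollary \ref{cor:p0largeAlphaBounds} we have $1-4p\alpha > 0$, so this quadratic has two distinct real roots
\[
 s_- = \frac{1-\sqrt{1-4p\alpha}}{2\alpha}, \qquad s_+ = \frac{1+\sqrt{1-4p\alpha}}{2\alpha},
\]
both of which lie in $(0,1)$ (as noted in the proof of Corollary \ref{cor:p0largeAlphaBounds}, $h_p$ has two turning points inside $(0,1)$). It remains to identify $s'$ as the smaller root $s_-$.

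From the sign of $h_p'$ above, $h_p$ is decreasing on $(0,s_-)$, increasing on $(s_-, s_+)$, and decreasing on $(s_+, 1)$. Combined with $h_p(1) = -e^{-1}$ (which holds because $G(1) = -W_i(h_p(1)) = 1$ forces $W_i(h_p(1)) = -1$), this forces $h_p(s_+) > -e^{-1}$ (strict local maximum) and $h_p(s_-) \le -e^{-1}$ (local minimum). Since $h_p$ must remain $\geq -e^{-1}$ throughout $(0,1)$ for $G$ to be real, the local minimum is in fact attained: $h_p(s_-) = -e^{-1}$. By the uniqueness of $s'$ in $(0,1)$ established in Lemma \ref{lem:branchSwitch}, we conclude $s' = s_-$, as claimed. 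I do not foresee any serious obstacle here: the only subtlety is making sure $s'$ is the smaller root rather than $s_+$, which is settled by the monotonicity analysis and the boundary value $h_p(1) = -e^{-1}$.
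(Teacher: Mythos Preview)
Your proof is correct and follows essentially the same approach as the paper: identify the two turning points of $h_p$ as the roots of $\alpha s^2 - s + p$, then rule out the larger root $s_+$ by showing $h_p(s_+) > -e^{-1}$ via the monotonicity of $h_p$ on $(s_+,1)$ together with $h_p(1) = -e^{-1}$. One small note: the intermediate assertion $h_p(s_-) \le -e^{-1}$ does not follow from the monotonicity and the value $h_p(1)=-e^{-1}$ alone, but it is also unnecessary---once you know $h_p(s_+) \neq -e^{-1}$ and that $s'$ is a turning point with $h_p(s') = -e^{-1}$ (from Lemma~\ref{lem:branchSwitch}), the conclusion $s' = s_-$ is immediate.
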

\begin{proof}
 Proceeding as in the proof of Corollary \ref{cor:p0largeAlphaBounds}, we see that the turning points of $h_{p}(s)$ are $s_1 = \frac{1-\sqrt{1-4 p \alpha}}{2 \alpha}$ and $s_2 = \frac{1+\sqrt{1-4 p \alpha}}{2 \alpha}$ (notice that for $p > 1-\alpha$ we have $s_1, s_2 \in (0,1)$). Now, as we discussed above, we must have $h_{p}(s_1) = -e^{-1}$ and $h_{p}(s_2) > -e^{-1}$. Consequently, we have $f_{-1}(s_1) = f_{0}(s_1)$.
\end{proof}

In the following corollary let us finally summarise what we can say about the value of $p$ in the case $\alpha > 1/2$.

\begin{cor}
\label{cor:pequations}
 For $\alpha > 1/2$, taking $s' = \frac{1-\sqrt{1-4 p \alpha}}{2 \alpha}$, the value of $p \in (1-\alpha, \frac{1}{4 \alpha})$ satisfies $h_{p} (s') = -e^{-1}$.
\end{cor}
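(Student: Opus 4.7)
The plan is to assemble the three previous results, Corollary \ref{cor:p0largeAlphaBounds}, Lemma \ref{lem:branchSwitch} and Corollary \ref{cor:s'Value}, into the stated identity; no new computation is needed beyond observing continuity of $G$ at the switch point.

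First, I would invoke Corollary \ref{cor:p0largeAlphaBounds} to fix the range $p \in (1-\alpha, \frac{1}{4\alpha})$, which is exactly the range appearing in the statement. Next, Lemma \ref{lem:branchSwitch} provides a point $s' \in (0, s^*)$ such that $G(s) = f_{-1}(s)$ for $s < s'$ and $G(s) = f_0(s)$ for $s \ge s'$. The random variable $X$ is finite almost surely (since the underlying $\PGW(1)$ tree is almost surely finite), so $G$ is continuous on $[0,1]$, and therefore $f_{-1}(s') = f_0(s')$. Recalling that $f_i(s) = -s W_i(h_p(s))$, and that the two real branches $W_{-1}$ and $W_0$ of the Lambert W-function coincide at exactly one point, namely $-e^{-1}$, this forces $h_p(s') = -e^{-1}$.

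Finally, Corollary \ref{cor:s'Value} identifies $s'$ explicitly as the smaller turning point of $h_p$, giving
\[
s' \;=\; \frac{1 - \sqrt{1 - 4 p \alpha}}{2\alpha},
\]
which matches the formula in the statement; combining this with the equation $h_p(s') = -e^{-1}$ from the previous paragraph completes the proof. There is no real obstacle here: the corollary is a bookkeeping statement that packages Lemma \ref{lem:branchSwitch} and Corollary \ref{cor:s'Value} into a single implicit equation for $p$, to be used later in Section \ref{sec:generalisations}. The only point that needs any thought is the continuity argument that pins down $h_p(s') = -e^{-1}$, and this is immediate from the fact that $W_{-1}$ and $W_0$ only meet at $-e^{-1}$ together with the continuity of $G$.
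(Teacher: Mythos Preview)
Your proposal is correct and matches the paper's approach: the paper states this corollary without proof, explicitly introducing it as a summary (``let us finally summarise what we can say''), so assembling Corollary~\ref{cor:p0largeAlphaBounds}, Lemma~\ref{lem:branchSwitch} and Corollary~\ref{cor:s'Value} is exactly what is intended. Your continuity argument is fine but slightly redundant, since the identity $h_p(s') = -e^{-1}$ is already established inside the proofs of Lemma~\ref{lem:branchSwitch} and Corollary~\ref{cor:s'Value} themselves.
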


 \begin{figure}[htb]
    {\includegraphics[width=.7\textwidth]{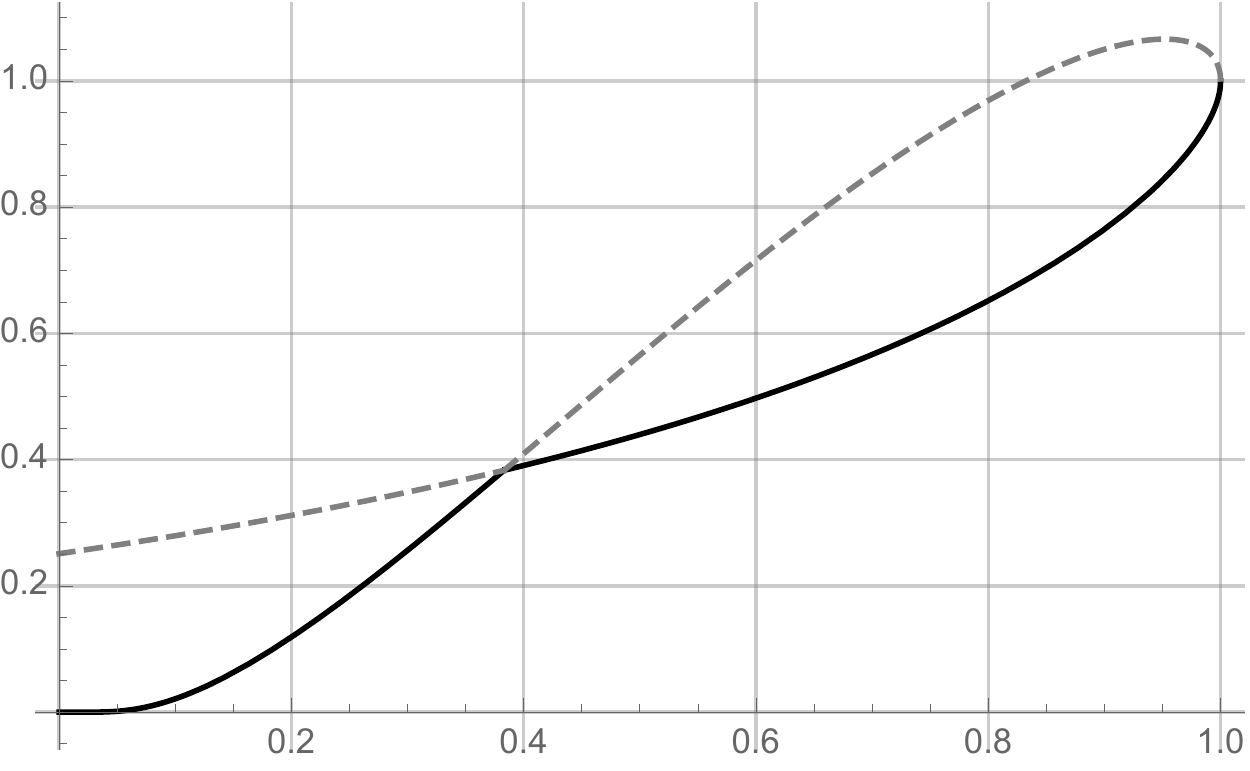}}
    \caption{The graphs of $f_0(s)$ (black solid curve) and $f_{-1}(s)$ (grey dashed curve) for $\alpha = 0.9$ and $p = 0.251042$, giving $s' \approx 0.3832$.}
    \label{fig:largeAlpha}
 \end{figure}

Equipped with Lemma \ref{lem:branchSwitch} and the above corollaries, we can understand the behaviour of $G(s)$ when $\alpha > 1/2$. Since we do not have an analytic expression for $p$ in that case, Figure \ref{fig:largeAlpha} shows an approximation of the probability generating function of $X$ when $\alpha = 0.9$, in which case we obtain $p \approx 0.251042$ and $s' \approx 0.3832$.

\section{Generalisations}
\label{sec:generalisations}

Consider our parking process on a $\PGW(1)$ tree. There are two aspects of this model which one might think of generalising: the distribution of the number of cars arriving at each vertex, and the offspring distribution of the Galton-Watson process, i.e.\ the laws of $P$ and $N$ respectively.  
One specific such situation, which we shall summarise below, has been studied by Jones~\cite{Jones} in the context of a model for rainfall runoff down a hill.  (We emphasise that the results in our papers were obtained independently, and it was only by a happy accident that we became aware of Jones' work.)  
We will then give a brief overview of the sorts of generalisations that one might expect in the situations of subcritical, critical and supercritical offspring distributions respectively.  We do not attempt an exhaustive survey here, but rather defer that to future work.  We focus on the random variable $X$ and potential analogues of the phase transition (\ref{eqn:discphtr}).  We think of the parking process as a dependent version of site percolation, where vertices for which $X > 0$ are occupied.

Before we discuss generalisations, we remind the reader of an important result due to Kesten, to which we will shortly make appeal.

\begin{thm}[Kesten~\cite{Kesten}] \label{thm:Kesten}
Suppose that $(Z_n)_{n \ge 0}$ is a Galton-Watson process with offspring distribution $\nu$ such that $\nu(0) < 1$ and $\mu = \sum_{k=1}^{\infty} k \nu(k) \le 1$.  Let $T$ be the associated family tree.  Then if $T_n$ is distributed as $T$ conditioned on the event $\{Z_n > 0\}$, we have
\[
T_n \convdist T_{\infty},
\]
as $n \to \infty$, in the sense of local weak convergence, where $T_{\infty}$ is the random tree constructed as follows.  First, take an infinite path labelled by $\{1,2,3,\ldots\}$, rooted at $1$. To each node along the path, attach an independent random number of children, with distribution $\hat{\nu}(k) = (k+1) \nu(k+1)/\mu$, $k \ge 0$.  Then attach an independent Galton-Watson tree with offspring distribution $\nu$ rooted at each of these neighbours of the infinite path.
\end{thm}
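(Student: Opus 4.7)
My plan is to prove local weak convergence via the standard reduction: it suffices to show that for every $k \ge 0$, the first $k$ generations of $T_n$ converge in distribution to the first $k$ generations of $T_\infty$, with the discrete topology on finite rooted trees of depth at most $k$. Since all edges of $T$ and $T_\infty$ point towards the root, the ball $B_T(\rho,k)$ coincides with the subtree spanned by the first $k$ generations, so this is equivalent to convergence in the metric $d_{\mathrm{loc}}$. Writing $T|_k$ for the truncation of $T$ to depth $k$, the goal becomes $\Prob{T_n|_k = t} \to \Prob{T_\infty|_k = t}$ for every fixed $t$ of depth $k$.

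The central identity comes from the branching property. Conditional on $T|_k = t$, the subtrees hanging off the $|\partial_k t|$ vertices at depth $k$ in $T$ are i.i.d.\ $\nu$-Galton-Watson trees, so the event $\{Z_n > 0\}$ is exactly the event that at least one of these subtrees survives to its own generation $n-k$. Writing $p_m = \Prob{Z_m > 0}$, this gives
\[
\Prob{T_n|_k = t} = \Prob{T|_k = t} \cdot \frac{1 - (1-p_{n-k})^{|\partial_k t|}}{p_n}.
\]
I would then analyse the fraction via the recursion $p_n = 1 - f(1 - p_{n-1})$, where $f$ is the probability generating function of $\nu$. Provided $p_n \to 0$ (automatic whenever $\mu \le 1$ and $\nu \ne \delta_1$), the Taylor expansion $f(1-x) = 1 - \mu x + o(x)$ at $x=0$ gives $p_n = \mu p_{n-1} + o(p_{n-1})$, hence $p_{n-k}/p_n \to \mu^{-k}$. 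Combining this with the elementary estimate $1 - (1-p_{n-k})^j \sim j\,p_{n-k}$ yields
\[
\Prob{T_n|_k = t} \longrightarrow \frac{|\partial_k t|}{\mu^k}\,\Prob{T|_k = t}.
\]

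The final step is to verify that this limit equals $\Prob{T_\infty|_k = t}$ by a direct calculation from the spine construction. Fixing a distinguished ray $\rho = v_0, v_1, \ldots, v_k$ in $t$ and writing $c_i$ for the number of children of $v_i$, Kesten's construction contributes a factor $\hat\nu(c_i-1) \cdot \tfrac{1}{c_i} = \nu(c_i)/\mu$ at each spine vertex (size-biased degree, then uniform choice of continuation), while each off-spine child spawns an independent $\nu$-GW tree; summing over the $|\partial_k t|$ possible rays in $t$ reproduces exactly $\mu^{-k}|\partial_k t|\,\Prob{T|_k=t}$. I do not anticipate a serious technical obstacle: the main delicacy is that the Taylor step must succeed without a finite-variance hypothesis, which is why working directly with the recursion (rather than invoking a Kolmogorov-type asymptotic for $p_n$) is preferable. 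The degenerate case $\nu = \delta_1$ must be handled separately, since there $p_n \equiv 1$ and the Taylor expansion is vacuous, but in that case $T$ and $T_\infty$ both coincide with the deterministic infinite path and the statement is immediate.
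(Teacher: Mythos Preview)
The paper does not prove this theorem: it is stated as a known result due to Kesten, with a citation to \cite{Kesten}, and is used as a black box in Section~\ref{sec:generalisations}. There is therefore no ``paper's own proof'' to compare against.

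That said, your proposal is the standard argument and is correct. The reduction to depth-$k$ truncations, the branching-property identity
\[
\Prob{T_n|_k = t} = \Prob{T|_k = t}\,\frac{1-(1-p_{n-k})^{|\partial_k t|}}{p_n},
\]
and the ratio asymptotic $p_{n-k}/p_n \to \mu^{-k}$ derived from $p_n = \mu p_{n-1} + o(p_{n-1})$ are exactly the ingredients one needs; your observation that only $f'(1^-)=\mu<\infty$ (and not a second moment) is required for the Taylor step is the right technical point. The identification of the limit with the spine construction via the ``size-bias plus uniform spine continuation'' bookkeeping is also standard, though when you write it up you should be explicit about whether you work with ordered or unordered trees, since the paper treats $T$ as a plane tree and the factor $1/c_i$ in your sketch corresponds to placing the spine child in a uniformly random position among its siblings. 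The degenerate case $\nu=\delta_1$ is handled correctly.
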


In the case where $\nu$ is a Poisson distribution we have $\hat{\nu} = \nu$ and so this spine decomposition has the particularly simple form we exploited earlier in the paper.

\subsection{Binary branching, paired arrivals} \label{subsec:Owen}
We turn now to Jones' results from \cite{Jones}.  He takes the offspring distribution to be
\[
\Prob{N = 0} = \beta, \quad \Prob{N = 1} = 1-2\beta, \quad \Prob{N=2} = \beta,
\]
where $\beta \in (0,1/4]$, and the arrival distribution to be
\[
\Prob{P = 0} = 1- \alpha/2, \quad \Prob{P = 2} = \alpha/2,
\]
where $\alpha \in (0,2)$, so that we have $\E{P} = \alpha$. (Our parameterisation differs from the one used in \cite{Jones} to provide an easier comparison with the results of Section~\ref{sec:intro}.) Note that the offspring distribution is critical for all values of $\beta$.  Jones observes completely analogous phenomena to those we have discussed above.  Specifically, for each $\beta \in (0,1/4]$, let
\begin{equation}
\label{eq:JonesCrit}
\alpha_c(\beta) = 1 + \beta - \sqrt{\beta(2 + \beta)}.
\end{equation}
Then
\begin{equation}
\label{eq:JonesExp}
 \E{X} = \begin{cases}
  \frac{1 - \alpha + 2 \alpha \beta - \sqrt{1-2\alpha(1-\alpha/2 + \beta)}}{2\beta} & \text{for $\alpha \le \alpha_c(\beta)$}\\
  \infty & \text{for $\alpha > \alpha_c(\beta)$}.
 \end{cases}
\end{equation}
(Jones formulates his results in terms of the random variable $W = (X-1)^+$ but it is relatively straightforward to translate between the two situations.) For $\beta = 1/4$, for example, we get $\alpha_c(1/4) = 1/2$ and at the point of the phase transition the mean is $\E{X} = 3/2$.

Strikingly, Jones observes the same ``branch-switching'' phenomenon in the supercritical phase as we do.  The probability generating function $G(s) = \E{s^X}$ satisfies a quadratic equation to which there are two possible solutions: in the subcritical phase, one of them gives the generating function for all $s \in [0,1]$; in the supercritical phase, the generating function follows one branch at the start of the interval and the other from a point in the middle of the interval.

Jones also considers what happens in the tree conditioned to be infinite.  By Theorem~\ref{thm:Kesten}, we have an infinite spine to each point of which we attach an extra edge (leading to an independent copy of the unconditioned tree) with probability $\hat{\nu}(1) = 2\beta$ and no edge otherwise. An analogous random walk argument leads to a finite expected number of cars at the root if and only if
\[
\E{P} -1 + \E{\hat{N}} \E{(X-1)^+} < 0,
\]
where $\hat{N}$ is a random variable with law $\hat{\nu}$ having expectation
\[
 \mathbb{E}[\hat{N}] = \sum_{k \geq 0} \frac{k (k+1) \Prob{N=k+1}}{\E{N}} = \frac{\E{N^2}-\E{N}}{\E{N}} = \E{N^2}-1.
\]
In other words, the expected number of cars at the root is finite iff
\[
\E{X} < \frac{\E{P} \var{N} + 1 - \E{P}}{\var{N}} = \frac{1 - \alpha + 2 \alpha \beta}{2 \beta},
\]
which by \eqref{eq:JonesExp} and \eqref{eq:JonesCrit} occurs iff $\alpha < \alpha_c(\beta)$.  We emphasise that, as in the Poisson case, the critical point is the same for the conditioned and unconditioned trees.

(Jones also partly generalises his results to arbitrary arrival distributions with the same binary branching but we will not give the details here.)

\subsection{Subcritical branching}

For completeness, we now show that a phase transition of the form (\ref{eqn:discphtr}) for $\E{X}$ cannot occur if the offspring distribution is subcritical.

\begin{prop}
Let $\lambda = \E{N}$.  If $\lambda < 1$ then $\E{X} < \infty$ for all $\alpha \ge 0$.  
\end{prop}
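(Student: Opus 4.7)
The plan is to exploit the crude monotone bound that the number $X$ of cars visiting the root cannot exceed the total number of cars that originally arrived somewhere in the (now a.s.\ finite) tree. Write $\tau$ for the Galton--Watson tree with offspring distribution $N$, rooted at $\rho$, and $(P_v)_{v \in V(\tau)}$ for the i.i.d.\ copies of $P$ specifying the numbers of arrivals, with $\alpha = \E{P}$. Since every car that ever visits $\rho$ must first have been placed at some vertex of $\tau$, and each such car passes through $\rho$ at most once, I would begin from the deterministic inequality
\[
X \le \sum_{v \in V(\tau)} P_v.
\]

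The second step is to take expectations by conditioning on $\tau$. As $(P_v)_{v \in V(\tau)}$ is independent of $\tau$ with i.i.d.\ entries of mean $\alpha$, conditional expectation yields $\E{\sum_{v \in V(\tau)} P_v \mid \tau} = \alpha |V(\tau)|$. The standard identity for the total progeny of a subcritical Galton--Watson process gives $\E{|V(\tau)|} = \sum_{n \ge 0} \lambda^n = 1/(1-\lambda)$, obtained by observing that the expected size of generation $n$ equals $\lambda^n$. Combining these observations,
\[
\E{X} \le \frac{\alpha}{1-\lambda} < \infty,
\]
for every $\alpha \ge 0$, which is the conclusion.

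No step in this argument poses a real obstacle: the whole content is a one-line first-moment computation, and the only non-trivial input, finiteness of the expected total progeny, is precisely what subcriticality provides. The bound degenerates at $\lambda = 1$, where $\E{|V(\tau)|} = \infty$, and this is exactly what leaves room for the discontinuous phase transition of \eqref{eqn:discphtr} in the critical regime and for the infinite expectations seen in the supercritical case.
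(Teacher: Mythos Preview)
Your proof is correct and is essentially identical to the paper's own argument: bound $X$ by the total number of arrivals $\sum_{v} P_v$, then use $\E{|V(\tau)|} = 1/(1-\lambda)$ to get $\E{X} \le \alpha/(1-\lambda)$. The only cosmetic difference is that the paper writes the total progeny as $Q$ and the bound as $X \le \sum_{i=1}^{Q} P_i$ without spelling out the conditioning step.
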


\begin{proof}
Write $Q$ for the total progeny of the branching process.  Then it is elementary that $\E{Q} = \frac{1}{1-\lambda}$.  Now observe that we have the crude bound $X \le \sum_{i=1}^{Q} P_i$ and that the right-hand side has expectation $\frac{\alpha}{1-\lambda}$ which is finite for all $\alpha \ge 0$.
\end{proof}

\subsection{Critical branching}
Now suppose that we fix an offspring distribution such that $\lambda = \E{N} =1$ and $\var{N} < \infty$, and assume that $\var{P} < \infty$.   

Let us make the (unjustified) hypothesis that $\var{X} < \infty$ whenever $\E{X} < \infty$.  Then, using the RDE (\ref{eqn:rde}) and considering the variances of the two sides, we see that
\[
\var{X} = \var{P} + \var{(X-1)^+} + \E{(X-1)^+}^2 \var{N}.
\]
After rearrangement and cancellation this yields a quadratic equation for $\E{X}$:
\[
0 = \var{N} \E{X}^2 - 2(1 -\alpha + \alpha \var{N}) \E{X} + \var{P} + \alpha + \alpha^2(\var{N} - 1).
\]
The discriminant is
\[
\Delta = 4\left( 1 - 2 \alpha + \alpha^2 + \var{N}(\alpha - \alpha^2 - \var{P}) \right),
\]
and this quantity must be non-negative in order to obtain a meaningful value for $\E{X}$.  Assuming this to be the case then there are \emph{a priori} two possible values for $\E{X}$:
\[
\frac{1 - \alpha + \alpha \var{N} \pm \sqrt{1 - 2 \alpha + \alpha^2 + \var{N}(\alpha - \alpha^2 - \var{P}) }}{\var{N}}.
\]
In both the Poisson case we study in this paper, and the situation studied by Jones, we take the smaller root, and this value is correct all the way up to the phase transition.

In order to meaningfully talk about a phase transition in a more general setting, we need a family of distributions for $P$, parameterised by $\alpha = \E{P}$ for $\alpha \ge 0$.  Again we assume $\var{P} < \infty$ and write $h(\alpha) = \var{P} + \alpha^2 - \alpha = \E{P^2} - \alpha$.  Note that as $P$ takes non-negative integer values, $P(P-1) \ge 0$, and so $h(\alpha) \ge 0$.  Observe also that $h(0) = 0$.  We will make the natural assumption that $P$ is stochastically increasing in $\alpha$ which entails that $h(\alpha) = \E{P(P-1)}$ is an increasing function.  

We must then have that $\E{X}$ is increasing as a function of $\alpha$.  The function $\alpha \mapsto (1 - \alpha)^2 - \var{N}h(\alpha)$ is decreasing on $[0,1]$. So if $\var{N} \le 1$, the numerator can only be an increasing function if we take the smaller root.  This argument leads us to make the following conjecture.

\begin{conj} \label{conj:phtr}
Suppose that $\lambda = 1$ and that $\var{N} \le 1$.  Suppose that $P$ is stochastically increasing in $\alpha$ and that $\var{P} < \infty$ for all $\alpha \ge 0$.  Define 
\[
\alpha_c = \inf\left\{\alpha \ge 0: \alpha = 1 - \sqrt{\var{N} h(\alpha)}\right\}.
\]
Then 
\[
\E{X} =  \begin{cases}
\frac{1 - \alpha + \alpha \var{N} - \sqrt{(1 - \alpha)^2 - \var{N}h(\alpha)}}{\var{N}} & \text{ if $\alpha \le \alpha_c$} \\
\infty & \text{ if $\alpha > \alpha_c$.}
\end{cases}
\]
\end{conj}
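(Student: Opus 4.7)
The plan is to generalize Section \ref{sec:PoiGWcritical} by combining (i) the functional equation for the pgf of $X$, (ii) the variance identity coming from the RDE, and (iii) a truncation argument via the variables $X^{(k)}$. Writing $G(s) = \E{s^X}$, $G_P(s) = \E{s^P}$, $G_N(s) = \E{s^N}$, $p = G(0)$, and using $\E{s^{(X-1)^+}} = p + (G(s) - p)/s$, the RDE \eqref{eqn:rde} gives
\[
G(s) \;=\; G_P(s)\, G_N\!\left( p + \frac{G(s) - p}{s} \right).
\]
Differentiating at $s = 1^-$ and applying Abel's theorem together with $G_P'(1) = \alpha$, $G_N'(1) = 1$ yields $\E{X} = \alpha - 1 + p + \E{X}$, so either $\E{X} = \infty$ or $p = 1-\alpha$. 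A sign analysis of $G'$ near $s=1$, in the spirit of Lemma \ref{lem:pNotTooSmall}, gives $p \ge 1-\alpha$ in all cases.

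Let $X^{(k)}$ be the number of cars reaching the root of the tree truncated at depth $k$. By the monotonicity of parking in both tree and arrivals, $X^{(k)} \uparrow X$ almost surely, and $m_k = \E{X^{(k)}}$, $\sigma_k^2 = \var{X^{(k)}}$ satisfy the coupled recursion
\[
m_{k+1} = \alpha + m_k - 1 + p_k, \qquad \sigma_{k+1}^2 = \sigma_k^2 + \var{P} - p_k(2m_k - 1 + p_k) + \var{N}(m_k - 1 + p_k)^2,
\]
where $p_k = \Prob{X^{(k)} = 0}$. Assuming $\E{X} < \infty$ forces $p_k \to 1-\alpha$, in which case the drift of the $\sigma_k^2$-recursion converges to
\[
\Delta_\infty(m) \;=\; \var{N}(m-\alpha)^2 - 2(1-\alpha)(m-\alpha) + h(\alpha),
\]
a quadratic in $m - \alpha$ with discriminant $4[(1-\alpha)^2 - \var{N} h(\alpha)]$. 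The function $\alpha \mapsto (1-\alpha)^2 - \var{N} h(\alpha)$ is strictly decreasing on $[0,1]$ (since $h$ is nondecreasing by the stochastic monotonicity of $P$), vanishes at $\alpha_c$, and is negative for $\alpha > \alpha_c$.

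For $\alpha > \alpha_c$, the quadratic $\Delta_\infty$ is bounded below by a strictly positive constant uniformly in $m$, so $\sigma_k^2 \to \infty$ regardless of the behaviour of $m_k$; I would then convert this into $\E{X} = \infty$ by appealing to the infinite-spine tree of Theorem \ref{thm:Kesten}, where under the supercritical drift the random walk representation along the spine has positive drift, forcing infinitely many cars to visit the root. For $\alpha < \alpha_c$ the two roots are distinct; linearising the joint $(m_k, \sigma_k^2)$-recursion around the smaller root gives spectral radius strictly less than $1$, while around the larger root it exceeds $1$. Initialising at $(m_0, \sigma_0^2) = (\alpha, \var{P})$ then pulls the iterates onto the smaller root. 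This is corroborated by the hypothesis $\var{N} \le 1$, which (as observed in the paragraph preceding the conjecture) is precisely what makes the larger-root expression a decreasing function of $\alpha$, inconsistent with the monotonicity of $\E{X}$ in $\alpha$ from the stochastic coupling $P^{(\alpha)} \le P^{(\alpha')}$.

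\textbf{Main obstacle.} The hardest step will be rigorously excluding the larger root in the subcritical phase and controlling the dynamics uniformly as $\alpha \uparrow \alpha_c$. In Section \ref{sec:PoiGWcritical} this was achieved via the explicit witness $s^* \in (0,1)$ of Corollary \ref{cor:psmallAlpha} at which the wrong Lambert branch produces $G(s^*) = 1$; no such closed-form shortcut is available for general $G_P$ and $G_N$. Moreover, at $\alpha = \alpha_c$ the two roots coalesce and the linearised dynamics degenerates, so the contraction argument must be supplemented by quantitative estimates that I do not see how to establish without further structural assumptions on $P$ and $N$. Closing this gap is plausibly why the statement is only conjectured.
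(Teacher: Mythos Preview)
The statement is labelled as a \emph{Conjecture} in the paper, and the paper does not prove it. What the paper supplies is only heuristic motivation: it makes the explicitly ``(unjustified) hypothesis'' that $\var{X}<\infty$ whenever $\E{X}<\infty$, takes variances on both sides of the RDE to obtain the quadratic
\[
\var{N}\,\E{X}^2 - 2(1-\alpha+\alpha\var{N})\,\E{X} + \var{P} + \alpha + \alpha^2(\var{N}-1) = 0,
\]
and then argues that the smaller root must be chosen because $\E{X}$ is increasing in $\alpha$ while, for $\var{N}\le 1$, the larger root is not. No rigorous treatment of the threshold $\alpha_c$ or of the infinite-mean regime is offered; the authors simply record the resulting formula as a conjecture.

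Your proposal goes well beyond the paper's heuristics. The truncations $X^{(k)}$ and the coupled $(m_k,\sigma_k^2)$ recursion are correct (your variance recursion matches the paper's quadratic after passing to the limit with $p_k\to 1-\alpha$), and this is a natural route to try to make the variance identity rigorous; the paper does not attempt it. Your diagnosis of the main obstacle is also on target: the paper's selection of the smaller root in the Poisson case relies on the explicit Lambert witness $s^*$ of Corollary~\ref{cor:psmallAlpha}, and no analogue exists for general $G_P,G_N$.

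One genuine gap in your supercritical direction is worth flagging. Under the standing assumption $\E{X}<\infty$, the positivity of $\Delta_\infty$ only forces $\sigma_k^2\to\infty$, i.e.\ $\var{X}=\infty$; it does not by itself give $\E{X}=\infty$. The spine argument you invoke does not close this: a random walk along the spine with finite-mean but infinite-variance increments can still have positive drift and hit $+\infty$, so infinitely many cars at the root of the conditioned tree does not follow. You would need either a direct implication $\E{X}<\infty\Rightarrow\var{X}<\infty$ (precisely the paper's unjustified hypothesis) or a tail argument placing the increments in a stable domain of index below $1$. This residual difficulty is consistent with the statement remaining a conjecture.
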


We conjecture that the jump from $\E{X} < \infty$ to $\E{X} = \infty$ coincides with the onset of long-range dependence in the model: above $\alpha_c$, the occupied cluster of the root appears to become macroscopic in the sense that it occupies a positive fraction of the tree.  Since the size of the tree has infinite expectation, this gives that $X$ also has infinite expectation.

Consider now the tree conditioned to be infinite, work under the conditions of Conjecture~\ref{conj:phtr} and suppose that the conjecture is true.  Then the same argument as in Section~\ref{subsec:Owen} gives that, if $\tilde{X}$ is the number of cars visiting the root of the conditioned tree, we have $\mathbb{E}[\tilde{X}] < \infty$ iff 
\[
\E{X} < \frac{1 - \alpha + \alpha \var{N}}{\var{N}},
\]
which occurs iff $\alpha < \alpha_c$.

\subsection{Supercritical branching}
Finally, let us consider the situation where $\lambda = \E{N} > 1$. Let $\E{P} = \alpha$ as usual. The first difference we immediately observe here is that an analogue of Proposition \ref{prop:p0} gives us
\[
 \E{X} = \frac{\lambda-\alpha - \lambda p_\lambda}{\lambda-1},
\]
where $p_\lambda = \Prob{X=0}$, whenever $\E{X}$ is finite. Observe that the assumption that $\E{X}$ is finite does not give us an explicit formula for $p_\lambda$. On the other hand, we can always bound $\E{X}$ from above by $\frac{\lambda-\alpha}{\lambda-1}$. Thus we see that as $\alpha$ increases from $0$, $\E{X}$ undergoes a discontinuous phase transition from a bounded value to $\infty$. In fact a stronger statement, found in the following theorem, is true. 

\begin{thm} \label{thm:supercritical}
Suppose that $\E{N} = \lambda > 1$ and that $P$ is stochastically increasing in $\alpha = \E{P}$. Then there exists $\alpha_c \in (0,1)$ such that if $\E{P} = \alpha < \alpha_c$ then $\E{X} < \frac{\lambda-\alpha}{\lambda-1}$, while if $\alpha > \alpha_c$ then, conditionally on the non-extinction of the tree, $X = \infty$ almost surely.
\end{thm}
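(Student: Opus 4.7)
My plan combines a zero-one dichotomy derived from the RDE with a monotonicity argument to set up the phase transition, then uses a supercriticality-driven tail bound to convert almost sure finiteness into finite expectation, and finally locates the transition via a truncation argument on one side and a spine comparison on the other.

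First, the RDE \eqref{eqn:rde} shows that $X = \infty$ iff at least one of the i.i.d.\ copies $X_1, \dots, X_N$ is infinite, so $q := \Prob{X < \infty}$ satisfies $q = \phi_N(q)$, where $\phi_N(s) = \E{s^N}$. Since $\lambda > 1$, the only fixed points of $\phi_N$ in $[0,1]$ are the extinction probability $q_N < 1$ and $1$, giving $\Prob{X = \infty} \in \{0, 1-q_N\}$. As $\{X = \infty\}$ is contained in the non-extinction event, the second case is equivalent to $X = \infty$ almost surely conditional on non-extinction. Combined with the stochastic monotonicity of $X_\alpha$ in $\alpha$ (by the same coupling as for the Poisson case), this produces a well-defined threshold $\alpha_c := \sup\{\alpha \ge 0 : X_\alpha < \infty \text{ a.s.}\}$, and the theorem's second assertion holds automatically for $\alpha > \alpha_c$. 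Next I would close the potential gap between ``$X < \infty$ a.s.'' and ``$\E{X} < \infty$''. Writing $a_K = \Prob{X > K}$, the RDE gives $a_K \ge 1 - \phi_N(1-a_{K+1})$, since $X > K$ whenever some $X_i > K+1$. Because $\phi_N'(1) = \lambda > 1$, a Taylor expansion at $1$ yields $1 - \phi_N(1-u) \ge (\lambda - \varepsilon)u$ for $u$ small; once $a_K$ becomes small enough, iteration forces $a_{K+1} \le (\lambda - \varepsilon)^{-1} a_K$, so $a_K$ decays geometrically and $\E{X} = \sum_K a_K < \infty$. Combined with the identity $\E{X}(\lambda - 1) = \lambda - \alpha - \lambda p_\lambda$ obtained by taking expectations in the RDE (cf.\ Proposition~\ref{prop:p0}), and with the positivity $p_\lambda > 0$ (which follows from $\Prob{P = 0} \ge 1 - \alpha > 0$ via Markov combined with the recursive lower bound $p_\lambda \ge \Prob{P = 0} \phi_N(\Prob{X \le 1})$), this delivers $\E{X_\alpha} < (\lambda-\alpha)/(\lambda-1)$ for all $\alpha < \alpha_c$.

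It then remains to check $\alpha_c \in (0,1)$. For $\alpha_c > 0$ I would work with the depth-$k$ truncations $X^{(k)} \uparrow X$: for $\alpha$ sufficiently small, a direct induction shows that $\sup_k \E{X^{(k)}}$ stays in a bounded region (the iteration for $f_k = \E{X^{(k)}}$ is a small perturbation of the trivial fixed point $f \equiv 0$), and monotone convergence then gives $\E{X_\alpha} < \infty$. For $\alpha_c < 1$ I would use the backbone decomposition of the supercritical tree conditional on non-extinction: the subtree of vertices with infinite descent is itself a supercritical Galton--Watson tree, with extinction-conditioned ``bushes'' attached along the way. Along any infinite backbone ray $\rho = v_0, v_1, \ldots$, the RDE gives
\[
 Y_j := (X_{v_j}-1)^+ \ge P_{v_j} + Y_{j+1} + B_j - 1,
\]
where $B_j \ge 0$ is the overflow contributed by the non-ray children of $v_j$, and iterating while using $Y_{j+k} \ge 0$ yields $Y_1 \ge \sum_{j=1}^\infty (P_{v_j} + B_j - 1)$. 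The aim is to show that as $\alpha \uparrow 1$, $\E{B_j}$ stays bounded away from $0$ -- in fact diverges, by applying an analogue of Theorem~\ref{thm:PoiGWcritical} to the subcritical bushes -- so that for $\alpha$ sufficiently close to $1$, $\E{P_{v_j} + B_j - 1} > 0$ and the SLLN forces $Y_1 = \infty$ almost surely on non-extinction, giving $X_\rho = \infty$. The main obstacle will be precisely quantifying the bush overflow as $\alpha \uparrow 1$, which requires a parallel parking analysis on subcritical Galton--Watson trees in the spirit of Section~\ref{sec:PoiGWcritical}, together with identifying the correct offspring distribution on the backbone.
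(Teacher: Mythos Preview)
Your core argument is correct and takes a genuinely different route from the paper. The paper defines $\alpha_c = \sup\{\alpha : \E{X} < \infty\}$ and then, for $\alpha > \alpha_c$, uses a spine argument: pick an infinite ray in the (suitably conditioned) tree, observe that the side-subtrees hanging off it contribute i.i.d.\ overflows each distributed as $(X-1)^+$, which has infinite mean by assumption, so the associated random walk along the spine drifts to $-\infty$ and $X = \infty$ a.s.\ on non-extinction. You instead define $\alpha_c = \sup\{\alpha : X < \infty \text{ a.s.}\}$ and close the gap from the other side: your tail-decay argument (from $a_K \ge 1 - \phi_N(1-a_{K+1})$ and $\phi_N'(1-) = \lambda > 1$) shows that $X < \infty$ a.s.\ forces geometric decay of $\Prob{X > K}$, hence $\E{X} < \infty$ --- indeed all moments finite, which is strictly more than the paper obtains. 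Your fixed-point derivation of the zero--one law ($q = \phi_N(q)$, so $\Prob{X = \infty} \in \{0, 1-q_N\}$) is also cleaner than the paper's heuristic version. One small gap common to both treatments: the \emph{strict} inequality $\E{X} < (\lambda-\alpha)/(\lambda-1)$ requires $p_\lambda > 0$, and your justification via $p_\lambda \ge \Prob{P=0}\,\phi_N(\Prob{X \le 1})$ is circular without first establishing $\Prob{X \le 1} > 0$.

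Your sketch for $\alpha_c < 1$ has a more serious problem. The bush overflow $\E{B_j}$ does \emph{not} diverge as $\alpha \uparrow 1$: each bush is a subcritical Galton--Watson tree with bounded expected total progeny, so $\E{B_j}$ stays uniformly bounded in $\alpha$. Worse, the claim $\alpha_c < 1$ can fail outright under the stated hypotheses: if $P$ is Bernoulli($\alpha$), an easy induction on the tree gives $X \le 1$ deterministically, so $\E{X} = \alpha < \infty$ for every $\alpha \in [0,1]$ and no transition occurs below $1$. (The paper does not prove $\alpha_c \in (0,1)$ either; its proof is devoted entirely to the implication $\E{X} = \infty \Rightarrow X = \infty$ a.s., with the location of $\alpha_c$ left unaddressed.)
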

\begin{proof}
 As already discussed, if $\alpha$ is such that $\E{X} > \frac{\lambda-\alpha}{\lambda-1}$ then $\E{X} = \infty$. Let $\alpha_c$ be the supremum of the set of $\alpha$ for which $\E{X}$ is finite. We need to show that for $\alpha > \alpha_c$ we have $\Prob{X = \infty \ | \ |T|=\infty} = 1$.

 Observe that $\Prob{X = \infty \ | \ |T|=\infty}$ is equal to either $0$ or $1$, as when this event has positive probability, there almost surely exists some vertex of the tree which is visited by infinitely many cars, and then the same must be true of the root. (On the other hand, if $\{|T| < \infty\}$ has positive probability then, conditionally on this event, $|T|$ has finite mean.  So then $\E{X | |T| < \infty} < \infty$ by the same argument as in the subcritical case.)
 
Let $T$ be the tree with offspring distribution $N$. Assume first that $\Prob{N=0}=0$ so that $|T| = \infty$ almost surely. Since $\lambda = \E{N} > 1$, we also have $\Prob{N>1} = \beta >0$. Choose an arbitrary path $(v_0, v_1, v_2, \ldots)$ from the root $v_0$ of the tree to infinity, without revealing the rest of the tree. Observe that every $v_i$ has at least one additional child (other than $v_{i+1}$) with probability $\beta$.
 
For $i \geq 0$, let $X_i$ be defined as follows. If $v_i$ has no other child but $v_{i+1}$, set $X_i = 0$. Otherwise, let $w_i$ be an arbitrary child of $v_i$ other than $v_{i+1}$. Next, let $Y_i$ be the number of cars that arrive at $w_i$ in the usual parking process on the subtree of $T$ rooted at $w_i$, and let $X_i = (Y_i-1)^+$. By assumption, we have $\E{Y_i} = \infty$, so also $\E{X_i | w_i \mbox{ exists}} = \infty$. Hence,
 \[
  \E{X_i} = \beta \E{X_i | w_i \mbox{ exists}} = \infty.
 \]
 Thus by the random walk interpretation of the parking process on a path, and by coupling the original parking process on $T$ with the process we describe above, we see that the number $X$ of cars that arrive at the root is infinite almost surely.
 
 Now, assume that $\Prob{N=0} > 0$ and let $q = \Prob{|T| < \infty}$. As $\Prob{N=0} > 0$ and $\E{N} > 1$, we have $0 < q < 1$. Conditioned on $\{|T| = \infty\}$, the distribution of $T$ is that of a multitype Galton-Watson tree $\tilde{T}$ with vertices of two types, $s$ and $e$. The root of $\tilde{T}$ is of type $s$. A vertex of type $s$ produces $S$ children of type $s$ and $E$ children of type $e$, with probability generating function $G(x, y) = \E{x^S y^E}$ given by
\[
G(x,y) = \frac{G_N((1-q)x+qy)-G_N(qy)}{1-q}.
\]
Most importantly, the probability that a vertex of type $s$ has no children of type $s$ is given by
\[
G(0,1) = \frac{G_N(q)-G_N(q)}{1-q} = 0.
\]
Moreover,
\[
\frac{\partial}{\partial x} G(x,1) = \frac{G_N'((1-q)x+q)(1-q)}{1-q} = G_N'((1-q)x+q),
\]
which for $x=1$ is equal to $G_N'(1) = \E{N} > 1$. On the other hand, the vertices of type $e$ produce only children of type $e$, and the subtrees rooted at vertices of type $e$ are subcritical with offspring distribution $N_e$ given by $\Prob{N_e = k} = q^{k-1}\Prob{N=k}$ for $k \geq 0$. (For more on the distributions of conditioned Galton-Watson trees see Abraham and Delmas \cite{AbrahamDelmas-GWintroduction}.)

To complete the proof, we now look at the parking process on the subtree of $\tilde{T}$ induced by the vertices of type $s$. By the above, these vertices form a supercritical Galton-Watson tree with offspring distribution $N_s$ satisfying $\Prob{N_s = 0} = 0$. Hence, we are back in the case we have already analysed and, by coupling the parking process limited to this subtree with the original process, we see that we again have $X = \infty$ almost surely.
\end{proof}

In the following proposition we discuss a natural example of the parking process in the supercritical setting: the complete infinite binary tree, with the distribution of the car arrivals concentrated on the values 0 and 2 only.  In this case, we are able to provide bounds on the critical value $\alpha_c$.

\begin{prop} \label{prop:binary}
 For the complete binary tree (i.e.\ $\Prob{N=2} = 1$) with arrival distribution
 \[
  \Prob{P=2} = \alpha/2, \quad \Prob{P = 0} = 1-\alpha/2,
 \]
there exists $\alpha_c \in [1/32,1/2]$ such that if $\alpha < \alpha_c$ then $\E{X} < 2-\alpha$, while if $\alpha > \alpha_c$ then $X = \infty$ almost surely.
\end{prop}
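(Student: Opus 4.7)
The existence of $\alpha_c \in (0,1)$ with the stated dichotomy is immediate from Theorem \ref{thm:supercritical} (applied with $N \equiv 2$ and the stochastically increasing family $P$), so the plan is just to bracket $\alpha_c$ in $[1/32, 1/2]$. Throughout, write $m = \E{X}$, $p = \Prob{X = 0}$, $r = \Prob{X \le 1}$ and $q_2 = \Prob{X = 2}$.

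For the upper bound $\alpha_c \le 1/2$, I would argue by contradiction: suppose $\E{X} < \infty$ at $\alpha = 1/2$. Taking expectations in the RDE $X \equidist P + (X_1 - 1)^+ + (X_2 - 1)^+$ gives $m = 2 - 2p - \alpha$. Writing the $\Prob{X \le 1}$-identity from the RDE yields $r = (1-\alpha/2)(r^2 + 2 r q_2)$, hence (when $r > 0$) $r + 2 q_2 = 2/(2-\alpha)$, together with $p = (1-\alpha/2) r^2$. Combining these with the elementary bound $m \ge (r-p) + 2 q_2 + 3(1 - r - q_2)$ (since $k\,\Prob{X = k} \ge 3\,\Prob{X = k}$ for $k \ge 3$) and substituting out $p$ and $q_2$ yields the quadratic constraint
\[
(2-\alpha)^2 r^2 - 3(2-\alpha) r + 2(1 + \alpha - \alpha^2) \le 0,
\]
whose discriminant factors as $(2-\alpha)^2(8\alpha^2 - 8\alpha + 1)$. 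The factor $8\alpha^2 - 8\alpha + 1$ is strictly negative throughout $((2-\sqrt 2)/4, (2+\sqrt 2)/4)$, and in particular equals $-1$ at $\alpha = 1/2$, so no real $r$ satisfies the inequality and $\E X = \infty$ there. Hence $\alpha_c \le 1/2$. The degenerate case $r = 0$ is ruled out separately: $X \ge 2$ almost surely would force $G(s) = s^2/A(s)$, whose Taylor coefficients alternate in sign and so cannot be a PGF.

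For the lower bound $\alpha_c \ge 1/32$, by the stochastic monotonicity of $X$ in $\alpha$ (couple the Bernoulli$(\alpha/2)$ indicators driving $P$ across different values of $\alpha$), it suffices to establish $\E X < \infty$ at $\alpha = 1/32$. The natural approach is to analyze the monotone iteration $X^{(0)} = P$, $X^{(d+1)} \equidist P + (X_1^{(d)} - 1)^+ + (X_2^{(d)} - 1)^+$, which satisfies $X^{(d)} \uparrow X$ in distribution via the standard increasing coupling on nested binary trees, and to show that $m_d = \E{X^{(d)}}$ remains uniformly bounded. The recursion $m_{d+1} = \alpha + 2(m_d - (1 - p_d))$ with $p_d = (1 - \alpha/2) r_{d-1}^2$ is not contractive under the crude bound $1 - p_d \ge \alpha/2$, which gives only $m_{d+1} \le 2 m_d$. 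The remedy is to run the induction jointly on the pair $(m_d, \Prob{X^{(d)} \ge 2})$: keeping $\Prob{X^{(d)} \ge 2}$ small inductively (which requires $\alpha$ small) forces $r_{d-1}$ close to $1$, so that $1 - p_d$ is substantially larger than $\alpha/2$, and balancing the two estimates produces a strict contraction for small enough $\alpha$.

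The main obstacle is precisely this coupled induction: controlling $\Prob{X^{(d)} \ge 2}$ in turn appears to require control of $\Prob{X^{(d)} \ge 3}$, and so on. A cleaner route is to apply Banach's fixed-point theorem to the RDE operator in a suitable Wasserstein-like metric on distributions with finite mean (in the spirit of Aldous and Bandyopadhyay~\cite{AldousBandyopadhyay}): on distributions whose mass is sufficiently concentrated at $0$, the RDE operator is a strict contraction, yielding a unique fixed point with finite mean. The specific constant $1/32$ emerges from explicit arithmetic in balancing the contraction rate against the factor $2$ appearing in the first-moment recursion; it is a convenient non-sharp value rather than a natural threshold.
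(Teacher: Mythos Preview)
Your upper bound argument is correct and genuinely different from the paper's. The paper does not touch the RDE at all for $\alpha_c\le 1/2$: it instead passes to the ``even-generation'' quaternary subtree and observes that site percolation with parameter $\alpha/2$ there is supercritical once $\alpha/2>1/4$, producing an infinite initially-occupied path along which infinitely many cars reach the root. Your moment/RDE argument is slicker and in fact stronger: the discriminant $8\alpha^2-8\alpha+1$ is negative on the whole interval $\bigl((2-\sqrt2)/4,(2+\sqrt2)/4\bigr)$, so your method actually yields $\alpha_c\le (2-\sqrt2)/4\approx 0.146$. One small point: your treatment of the degenerate case $r=0$ is opaque (you never define $A(s)$); it is simpler to note that $r=0$ forces $p=0$, hence $m=2-\alpha<2$, contradicting $X\ge 2$ a.s.

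Your lower bound, however, is not a proof but a plan with an acknowledged obstacle. You correctly observe that the one-step mean recursion $m_{d+1}=\alpha+2(m_d-(1-p_d))$ is expansive under the naive bound, and you propose to rescue it either by a coupled induction on $(m_d,\Prob{X^{(d)}\ge 2})$ or by a Wasserstein contraction argument---but you carry out neither, and you give no computation showing that $1/32$ (rather than some unspecified small constant) suffices. The paper takes a completely different, and much more direct, route: a Peierls-type first-moment bound. If the eventually-occupied cluster of the root has size $n$, it is a binary subtree with $n+1$ leaves (at most $C_n<4^n$ choices), at least $n/2$ of whose vertices must have $P=2$ (at most $\binom{n}{\lfloor n/2\rfloor}<2^n$ choices for which ones), each occurring with probability $\alpha/2$. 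Summing $4^n\cdot 2^n\cdot(\alpha/2)^{n/2}=(32\alpha)^{n/2}$ over large $n$ gives something less than $1$ precisely when $\alpha<1/32$, so the constant is not arbitrary but falls out of $4\cdot 2\cdot\sqrt{2}=\sqrt{32}$. This is the missing idea in your lower bound.
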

\begin{proof}
 By Theorem \ref{thm:supercritical} we know that we either have $\E{X} < 2-\alpha$ or $X = \infty$ almost surely. Let us show that for $\alpha > 1/2$ the latter holds. Consider first only the vertices in the ``even'' generations of the tree (with the root being the 0th generation), with edges ``inherited'' from the original tree (so that every vertex is adjacent to its four grandchildren). This gives a complete quaternary tree.  Consider now the set of vertices in this quaternary tree at which there are non-zero arrivals. For $\alpha/2 > 1/4$, there is an infinite path of initially occupied vertices. Observe that these vertices on their own give us an infinite eventually occupied path in the original tree, as the vertices in even generations on the path each have $P=2$. However, infinitely many of the vertices in odd generations on this path will also be initially occupied almost surely which implies that infinitely many cars will arrive at the starting vertex of the path, and so also at the root of the tree. Thus $X=\infty$ almost surely in this case.
 
 Now assume that $\alpha < 1/32$. We want to show that the eventually occupied cluster of the root is finite with positive probability. This implies that $X<\infty$ with positive probability, which in turn gives us $X < \infty$ almost surely, and so also $\E{X} < 2-\alpha$. If the cluster of eventually occupied vertices containing the root is infinite then for any $M$, there is some $n \geq M$ and a set $A$ of initially occupied vertices of size at least $n/2$ (as $P=2$ for an initially occupied vertex) such that the cars arriving in $A$ on their own occupy a cluster of size $n$ containing the root in the final configuration.
 
 Such a cluster of size $n$, together with all the immediate descendants of its vertices, forms a binary tree with $n+1$ leaves. It is well known that the number of such trees is equal to the $n$th Catalan number
 \[
  C_n = \frac{1}{n+1}\binom{2n}{n} < 4^n.
 \]
 There are $\binom{n}{\lfloor n/2 \rfloor} < 2^n$ ways to choose the set $A$. Therefore, the probability of the event that such a cluster of size $n$ can be found is at most
 \[
  \sum_{n=M}^{\infty} C_n \binom{n}{\lceil n/2 \rceil} (\alpha/2)^{n/2} <\sum_{n=M}^{\infty} 4^n 2^n (\alpha/2)^{n/2} < \sum_{n=M}^{\infty} \left ( (32 \alpha)^{1/2} \right )^n = \frac{\left( (32 \alpha)^{1/2} \right )^M}{1-(32 \alpha)^{1/2}} < 1
 \]
 for $\alpha < 1/32$ and $M = M_\alpha$ large enough. This completes the proof of the proposition.
\end{proof}

\section{Acknowledgments}

We are very grateful to Marie-Louise Lackner for introducing us to the problem, and to Owen Jones for telling us about his work and allowing us to see his manuscript \cite{Jones}. C.G.'s research is supported by EPSRC Fellowship EP/N004833/1.

\bibliographystyle{abbrv}

\end{document}